\documentclass{amsart}
\usepackage{amscd,amssymb,amsmath,amsthm}
\usepackage[dvips]{graphicx}
\usepackage[all]{xy}
%\usepackage{appendix}
% for commutative diagrams
%
%\usepackage[]{hyperref}
% REMOVE the hyperref package to avoid highlighted links

\theoremstyle{plain}

\newtheorem{proposition}{Proposition}
\newtheorem{theorem}[proposition]{Theorem}

\newtheorem{lemma}[proposition]{Lemma}
\newtheorem{remark}[proposition]{Remark}
\newtheorem*{proposition*}{Proposition}
\newtheorem*{theorem*}{Theorem}
\newtheorem*{corollary*}{Corollary}
\newtheorem*{lemma*}{Lemma}
\newtheorem*{remark*}{Remark}
\newtheorem*{example*}{Example}

\newcommand{\Z}{\mathbb{Z}}
\newcommand{\Q}{\mathbb{Q}}
\newcommand{\R}{\mathbb{R}}

\begin{document}

\title{Quantum Master Equation and Open Gromov-Witten Theory}

\author{Vito Iacovino}

%\address{Max-Planck-Institut f\"ur Mathematik, Bonn}

\email{vito.iacovino@gmail.com}

\date{version: \today}

%%%%%%%%%%%%%%%%%%%%%%%%%%%%%%%%%%%%%%%%%%%%%%%%%%%%%%%%%%%
%%%%%%%%%%%%%%%%%%%%%%%%%%%%%%%%%%%%%%%%%%%%%%%%%%%%%%%%%%%
%%%%%%%%%%%%%%%%%%%%%%%%%%%%%%%%%%%%%%%%%%%%%%%%%%%%%%%%%%%

\begin{abstract}
%As application of the general Open Gromov-Witten theory introduced by the author, 
We construct the higher genus Open-Closed Gromov-Witten potential as a solution of the quantum master equation defined up to quantum master isotopy. 

%We focus on the case of Calabi-Yau symplectic six-manifold and Maslov index zero Lagrangian. However, the construction can be applied to the general case.
\end{abstract}

\maketitle

\section{Introduction}

The develop of the theory of closed Gromov-Witten invariants has a long history and goes back to three decades ago (see e.g \cite{FO} and reference therein). In contrast,  the existence of an Open Gromov-Witten theory remained  a mystery for long time. The main reason is quite geometrical and stem on the fact that the moduli space of open (pseudo-)holomorphic curves has a not empty boundary. This  makes the count of open pseudoholomophic curves depending drastically on the perturbation of the moduli space.  

On the another hand, in the begin of nineties, Witten  in \cite{W} introduced a physical definition of Open Topological String for a pair $(X,L)$, where $X$ a Calabi-Yau three fold and $L$ is a Maslov index zero Lagrangian submanifold. Witten proposed a relation between the Open Topological String of $(X,L)$ and pertubative Chern-Simons Theory of $L$.

Open Topological String theory should be seen as the physical counterpart of Open Gromov-Witten theory. However the existence of any relation between \cite{W} and the problem of the boundary of the moduli space of pseudo holomorphic curves   went unnoticed  for long time both in the mathematical and the physical community. For long time, the article of Witten has been seen by the mathematical community as stating just a  duality between Lagrangian Floer Homology of the zero section of the cotangent bundle $T^*L$ and pertubative Chern-Simons theory of $L$, in the particular case $X=T^*L$. Mathematicians thought that it was not possible to define Open Gromov-Witten invariants in the same context  Open Topological String was  defined by physicists. 

The existence of a relation between the claim of \cite{W} and the solution of the problem of the boundary of Open Gromov-Witten invariants was first observed in \cite{OGW1}, \cite{OGW2}, where the mathematical solution was made precise.
In \cite{OGW1} we introduced the moduli space of pseudo-holomorphic multi-curves.  Roughly the moduli space of multi-curves consists in the moduli space of connected or unconnected curves whose perturbation is constrained in a suitable way. Multi-curves can be seen as the analogous of multi-instantons familiar in Donalson-Witten theory. 
Using these moduli spaces, in \cite{OGW1},  \cite{OGW2}, we proposed the general solution of the problem of the boundary of the moduli for open Gromov-Witten invariants and we developed a general  mathematical theory of Open Gromov-Witten invariants for the pair $(X,L)$, where $X$ is a Calabi-Yau symplectc six manifold and $L$ is a Maslov index zero Lagrangian submanifold . The moduli space of multi-curves are the natural mathematical counterpart of the pertubative expansion of the Wilson Loops appearing in \cite{W}.

Later, as an elaboration of the notion of system of chains introduced in \cite{OGW1}, in \cite{OGW3}  
%in order to treat the problem more systematically, 
we introduced the multi-curve-chain complex.  The relations defining $MC$-cycles correspond to the constrains defining (the perturbation of) the moduli space of multi-curves. Open Gromov-Witten invariants should be understood with values on $MC$-cycles.

%The relations are also strictly related(/specular) to (the point splitting regularization of) pertubative Chern-Simons theory. 

%$MCH$ encode all the informations related to the moduli space of multi-curve and provide the most general Open Gromov-Witten theory. 

%The relations defining $MCH$ are the same of pertubative Chern Simons,  providing the precise mathematical counterpart of the proposal of \cite{W}.  

In turn out that  $MC$-cycles  are strictly related to the mathematical theory of point splitting  Chern-Simons,  where they play the role of the configuration space of points.
%we should integrate the propagator of Chern Simons theory.  
However, we stress that the intuitive picture of \cite{W} can be misleading. 
The relations that Wilson loops satisfy (Skein relations) are not an elementary combination of the fundamental relations defining $MC$-cycles, hence they are not  directly related to the moduli of (multi-)curves. 
%In this sense $MCH$ is the fundamental object of Open Gromov-Witten theory, not the Wilson loops of \cite{W}. 
Instead, the Wilson loops of \cite{W} are in some sense a coherent combination of elements of $MC$-cycles, objects that we called coherent cycles in \cite{Boundary-States}.

%The  Wilson Loops of \cite{W} are not directly related to homomorphic curves. Rather the 

%The Wilson Loops of \cite{W} are in some sense a coherent combination of elements of $MCH$, what we called coherent states in \cite{Boundary-States}.  The relations that Coherent States satisfy (Skein relations) are a combinations of the more fundamental relations defining $MCH$. It is $MCH$ that is directly related to the moduli of (multi-)curves, not the Wilson loops. In this sense $MCH$ is the fundamental object of Open Gromov-Witten theory, not the Wilson loops of \cite{W}. 

In \cite{OGW3}, for any class $\beta \in H_2(X,L)$,  from the moduli space of multi-curves in the homology class $\beta$ we constructed a $MC$-cycle $Z_{\beta} $. $Z_{\beta}$ can be expressed as a combination of coherent $MC$-cycles (Wilson loops).  The frame of the Wilson loops is related to the  perturbation of the moduli space of constant maps.   It is proved that $Z_{\beta}$, up to isotopy of $MC$-cycles, does not depend on the choices we made in the construction (such as the compatible almost complex structure, pertubations, etc.)
%However, up to isotopy, $Z$ does not depend on the choice of any data.

%In this paper we couple $MCH$ with the , to a general element of $MCH$ we associate a solution of quantum master equation. To an isotopy of $MCH$ correspond an isotopy of quantum master equation. This result can be seen as a rielaboration of \cite{OGW2} in the light of \cite{MCH}. 

%Using the result of \cite{MCH}, 
Coupling $Z$ with the abelian Chern-Simons propagator (compare with \cite{OGW2}),
in this paper we construct the Open-Closed Gromov-Witten partition $\mathfrak{P}(Z)$ function.
%This can be seen as a rielaboration of \cite{OGW2} in the light of the result of \cite{MCH}. 
In principle we would like to define also the Open-Closed Gromov-Witten potential $\mathfrak{W}(Z)$ as the contribution of connected graphs and obtain the relation $\mathfrak{P}(Z)= \text{exp}(\frac{1}{g_s}\mathfrak{W}(Z))$. 
However there is a technicality. The $MC$-cycles $Z_{\beta}$ of \cite{OGW3} are constructed degree by degree, without imposing any relations between $Z_{\beta}$ and $Z_{\beta'}$ with $\omega(\beta')< \omega(\beta)$. In particular, it is not automatically true that $(Z_{\beta})_{\beta}$ can be expressed as the exponential of connected graphs. In order for this to be true $Z$ needs to satisfies what we call factorization property. We adapt the construction of \cite{OGW3} in order to obtain a collection of $MC$-cycles  $(Z_{\beta})_{\beta}$ that satisfies the factorization property.

\section{Multi Curve Cycles}

In this section we review the result of \cite{OGW3}. Fix a compact oriented three manifold $M$.

Fix an oriented three manifold $M$ and an homological  class $\gamma \in H_1(M, \Z)$. 
Consider the  objects  
\begin{equation} \label{generator-nice}
( H , ( w_h  )_{h \in H} ) 
\end{equation}
where $H$ is a finite set , $\{ w_h  \}_{h \in H} $ are closed one  dimensional  chains on $M$ on the homology class $\gamma$  which are close on the $C^0$-topology. 
Denote by $\mathfrak{Gen}(\gamma)^{\dagger}$ the set of the objects (\ref{generator-nice}) modulo the obvious equivalence relation:  $( H , ( w_h  )_{h \in H} ) \cong ( H' , ( w_h'  )_{h \in H'} )$ if there exists an identification of sets $H =H'$ such that $w_h=w_h'$. 

Define $\mathfrak{Gen}(\gamma) \subset \mathfrak{Gen}(\gamma)^{\dagger}$ as the subset obtained imposing the extra condition  
\begin{equation} \label{generator-transversality}
w_h \cap w_{h'} = \emptyset \text{    if    } h \neq h'.
\end{equation}
The vector space $\mathcal{Z}_{\gamma}$ of $\mathbf{nice \thickspace MC-cycles}$ in the homology class $\gamma$  is the formal vector space generated by $\mathfrak{Gen}(\gamma)$.

In this paper we consider $\mathcal{Z}_{\gamma}[[g_s]]$ the formal power series on the formal variable $g_s$ with coefficients  $\mathcal{Z}_{\gamma}$. 

%The vector space $\mathcal{Z}_{\gamma}$ of $\mathbf{nice \thickspace MC-cycles}$ in the homology class $\gamma$  is the space of formal power series on $g_s$ with coefficients the vector space generated by  $\mathfrak{Gen}(\gamma) $.

%\begin{remark}
%In \cite{OGW3} we defined 
%\end{remark}
%\subsection{isotopies}

%As in \cite{OGW3}, an isotopy $\tilde{Z}$ defines a one parameter family of element $\mathcal{Z}_{\gamma}[[g_s]]$. In this paper we use a slightly different notation. 

A generator of isotopies of $MC$-cycles consists in an array 
\begin{equation} \label{generator-nice-iso}
 ( H , \{ \tilde{w}_h  \}_{h \in H} , [a,b] ) 
\end{equation}
Hence  an isotopy of $MC$-cycles is defined by a formal power series 
$$\tilde{Z}= \sum_i  g_s^{k_i}   r_i ( H_i , ( \tilde{w}_{h,i}  )_{h \in H_i} , [a_i,b_i] ) ,$$
for some $r_i \in \Q$, $k_i \rightarrow \infty$.
In particular $\tilde{Z}$ defines a one parameter family  of elements of $Z_t \in \mathcal{Z}_{\gamma}$, $t \in [0,1]$, which can be discontinues on a finite number of times (we do not define $Z_t$ if $t$ is a discontinuity point). 
if for $h,h' \in H_i$ , $ \tilde{w}_{h_1,i},  \tilde{w}_{h_2,i}$ cross transversely at a time $t_0$, we require that $\tilde{Z}$ jumps according to the formula
\begin{equation} \label{jump}
 Z^{t_0^+} - Z^{t_0^-} =  \pm g_s^{k_i+1} r_i   (H_i , ( w_{h,i}^{t_0}  )_{h \in H_i \setminus \{h_1,h_2 \} }  )  .
\end{equation}
where the sign is defined by the sign of the crossing.
Denote by  $\tilde{\mathcal{Z}}_{\gamma}$ the set of isotopies of nice $MC$-cycles.

We need also to introduce the space of $MC$-cycles $\mathcal{Z}_{\gamma| \mathfrak{c}}[[g_s]]$ with not vanishing Chern Class $\mathfrak{c}$. This is done as in \cite{OGW3}. 

\subsubsection{Open Gromov-Witten $MC$-cycle}

Let $(X,L)$ be a pair consisting in a Calabi-Yau simplectic six-manifold $X$ and a Maslov index zero lagrangian submanifold $L$. 
%As in \cite{OGW1}, to deal with the nodes of type $E$ in sense of \cite{L}, 
We assume $[L] =0 \in H_3(X, \Z)$ and we  fix  a four chain $K$ with $\partial K =L$.

To the four chain $K$ it is associated a Chern Class $\mathfrak{c}(K) \in H_1(L,\Z)$.

\begin{theorem} \label{main-theorem} (\cite{OGW3})
Let $\beta \in H_2(X,L, \Z)$. To the moduli space of pseudoholomotphic multi-curves of homology class $\beta(L)$  it is associated a multi-curve cycle $Z_{\beta} \in \mathcal{Z}_{\partial \beta| \mathfrak{c}} [[g_s]]$ with Chern class $\mathfrak{c}$. $Z_{\beta}$ depends by the varies choices we made to define the  Kuranishi structure and its perturbation on the moduli space of multi-curves.
%( the compatible almost complex structure $J$, the representative $K$,  the various choices we made to define the  Kuranishi structure and its perturbation on the moduli space of curves). 
%Moreover  $Z_{\beta, \chi}$ does not change if add to $K$ a four-chain in $M$ homologically trivial.
Different choices lead to isotopic $MC$-cycles.
\end{theorem}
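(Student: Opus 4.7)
The plan is to construct $Z_\beta$ as a weighted formal sum over components of a perturbed moduli space of pseudo-holomorphic multi-curves, and then to prove independence of the auxiliary data by a parametric (cobordism) moduli argument. The strategy parallels the standard Kuranishi-theoretic construction of closed Gromov--Witten invariants in \cite{FO}, but it is adapted so that the inevitable boundary phenomena of open curves get absorbed into the target $\tilde{\mathcal{Z}}_{\partial\beta|\mathfrak{c}}$ rather than cancelling outright.

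First I would equip the moduli space $\mathcal{M}_\beta$ of multi-curves in homology class $\beta$ with a Kuranishi structure $\mathcal{K}$ compatible with the combinatorial stratification imposed by the multi-curve constraints of \cite{OGW1}. Choose a generic multi-section perturbation $\mathfrak{p}$ so that the perturbed zero locus is a stratified space of the expected virtual dimension in which every surviving component represents a smooth multi-curve whose boundary circles are mapped to transverse one-chains $(w_h)_{h\in H}$ on $L$, each in homology class $\partial\beta$. Set
\[ Z_\beta = \sum g_s^{k}\, r \, (H, (w_h)_{h\in H}), \]
summed over components, with $k$ recording the Euler-characteristic contribution and $r \in \Q$ the multiplicity of the multi-section. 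Genericity of $\mathfrak{p}$ forces the disjointness (\ref{generator-transversality}), so $Z_\beta$ lies in $\mathcal{Z}_{\partial\beta|\mathfrak{c}}[[g_s]]$; the Chern-class shift by $\mathfrak{c} = \mathfrak{c}(K)$ records the framing of boundary chains coming from the four-chain $K$.

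For well-definedness I would compare two choices $(\mathcal{K}_0, \mathfrak{p}_0)$ and $(\mathcal{K}_1, \mathfrak{p}_1)$ by constructing an interpolating Kuranishi structure $\tilde{\mathcal{K}}$ with perturbation $\tilde{\mathfrak{p}}$ over the parameter $t \in [0,1]$. Applying the extraction procedure parametrically produces a formal sum $\tilde{Z}$ of the shape (\ref{generator-nice-iso}) whose specializations at $t = 0, 1$ are the two candidate $Z_\beta$. For generic $\tilde{\mathfrak{p}}$ the one-parameter family of boundary chains on $L$ undergoes only transverse pairwise crossings at isolated times $t_0$, so the list of generators is exactly the data required by an element of $\tilde{\mathcal{Z}}_{\partial\beta|\mathfrak{c}}$.

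The main obstacle is to verify that the discontinuities of $t \mapsto Z_t$ are governed precisely by the jump formula (\ref{jump}). Locally near a crossing point of $\tilde{w}_{h_1}$ and $\tilde{w}_{h_2}$, the interpolating moduli develops a codimension-one stratum along which the two boundary components collide on $L$ and, by the multi-curve gluing rule of \cite{OGW1, OGW2}, are forced to merge into a single chain absorbed into a lower-$\omega$ stratum; a local model plus the standard gluing analysis identifies the contribution of this stratum with the reduced configuration $(H \setminus \{h_1, h_2\}, (w_{h}^{t_0})_{h \notin \{h_1, h_2\}})$, carrying one extra factor of $g_s$ from the drop in Euler characteristic, with sign prescribed by the orientation of the crossing and the Kuranishi conventions. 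The delicate point will be to show that the multi-curve constraints eliminate every other codimension-one boundary phenomenon (interior node bubbling, triple collisions of chains, sphere bubbling hitting $L$, strata where a boundary circle collapses to a point on $K$), so that the jumps of $Z_t$ are \emph{only} those of the form (\ref{jump}); once this local-to-global matching is in place, $\tilde{Z} \in \tilde{\mathcal{Z}}_{\partial\beta|\mathfrak{c}}$ and the theorem follows.
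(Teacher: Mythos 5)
First, note that the paper does not actually prove this theorem: it is quoted from \cite{OGW3}, and the only material here bearing on it is the review of the construction in Section 4. Measured against that review, your overall strategy (Kuranishi structure, generic multisection, pushforward of the perturbed zero locus, parametric cobordism for independence) is the right one, but your proposal collapses the structure of the construction in a way that creates a real gap. The object $Z_\beta$ is not extracted from a single moduli space $\mathcal{M}_\beta$: it is a system of chains indexed by decorated graphs $(G,m)$, built from the spaces $\overline{\mathcal{M}}_{G,m}$ of (\ref{multi-curve-interp0}) (products over components, quotiented by $\mathrm{Aut}(G,m)$, times a simplex $\Delta^l$), with Kuranishi structures and perturbations chosen inductively on strata so as to be compatible with the corner-face identifications (\ref{corner-faces-multi-interp0}), and then cut down by fiber product with powers of the four-chain $K$ as in (\ref{intersect-K-gen}); a nice generator $(H,(w_h)_{h\in H})$ only emerges after passing from the ``not abelian'' to the abelian to the nice $MC$-cycle. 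None of this bookkeeping appears in your construction, and it is exactly what makes the generators, the grading by $g_s$, and the equivalence relations of $\mathcal{Z}_{\partial\beta|\mathfrak{c}}$ come out right.

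Second, and more seriously, your ``delicate point'' is stated backwards. You propose to show that the multi-curve constraints \emph{eliminate} every codimension-one boundary stratum other than transverse crossings of boundary chains. They do not, and they are not meant to: the codimension-one boundary of the moduli of open curves is irremovable, and the entire design of $MC$-cycles is that these strata are \emph{absorbed} into the relations of the multi-curve chain complex via the corner-face compatibility of the perturbations, rather than cancelled by a local model. What survives as a genuine discontinuity of $t\mapsto Z_t$ is only the crossing of two boundary chains $w_{h_1},w_{h_2}$, and the jump (\ref{jump}) with its extra factor of $g_s$ is forced by that mechanism. A proof organized around the claim that the other boundary strata are absent would fail; one organized around the inductive compatibility of Kuranishi structures and perturbations at the corner faces, together with the cobordism argument for two such choices, is what \cite{OGW3} and the review in Section 4 actually carry out.
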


%The result is an extension of the main theorem of \cite{OGW3}. In  \cite{OGW3} we considered $MC$-cycles of a fixed Euler Characteristic. In particular only a finite number of decorated graphs are involved in the construction. This point is critical  in order to  construct a perturbation of the moduli space of multi-curve. 
%The extension to a full $MC$-cycle is made using the argument subsection \ref{extension-section}. Hence, using  the $MC$-cycles constructed  in \cite{OGW3} (which involves only a finite number of graphs) we can construct a full $MC$-cycles.

%--However  defined $MC$-cycles $Z^k \in \mathcal{Z}^k$ for each $k$ such that there exists  $\tilde{Z}^k$  isotopy between $Z_{k+1}$ and $Z_k$  for each $k$. 

\subsection{MC-cycles including the degrees} \label{nice-MC2-section}

In \cite{OGW3} we also considered nice $MC$-cycles which keep trace of the degrees of the multi-curves. A component of the multi-curve corresponds to an element of $V$ in (\ref{generator2}) below. Here we consider a slightly different version, which forget about the Euler Charactertic of each single component and as above we consider the full $MC$-cycle with arbitrary total Euler Characteristic.

Below we consider nice $MC$-cycles which arise from moduli space of multi-curves associated to graphs without closed components.
(A closed component of a decorated graph $G \in \mathfrak{G}$ considered in \cite{OGW3} is a component $c \in Comp(G)$ with $V_c = D_c = \emptyset$.) That is,  we have discarded  the purely closed contributions from the $MC$-cycle. 
%The closed components are the generators of the Closed Gromov-Witten partition function. 
%Open Gromov-Witten partition function is obtained quotient the Open-Closed partition function by the closed one. This corresponds to consider graphs without  closed  components. 

Denote $\Gamma = H_2(X,L)$.
A generator of $MC$-cycle is defined by an array
\begin{equation} \label{generator2}
(V, H,  ( w_{v,h} )_{v \in V,h \in H} ,  ( w^{ann}_h )_{h \in H}, (  \beta_v )_{v \in V}  )
\end{equation}
where $V$ and $H$ are finite sets, $\beta_v \in \Gamma$,   $ \{ w_{v,h} \}_{h \in H}$ are closed one dimensional chain on $M$ on the homology class $\partial \beta_v$ , which are close in the $C^0$-topology.  
 $ \{ w^{ann}_h \}_{h \in H}$  are closed one dimensional chains on $M$ in the homology class $\mathfrak{c}$  which are close in the $C^0$-topology.  
 %which are close to $w^{ann}$ in the $C^0$-topology.  

We assume that 
\begin{equation} \label{support-property-nice-cycles}
    \beta_v \notin tors(\Gamma), \quad \parallel \beta_v \parallel \leq C^{supp}  \omega(\beta_v) \text{   for each  } v \in V(G)
\end{equation}

Denote by $\mathfrak{Gen}(\beta)^{\dagger}$ the set of objects (\ref{generator2}) such that $\sum \beta_v = \beta$, modulo the obvious equivalence relation.   

Observe that (\ref{support-property-nice-cycles}) implies that there exists  $N_{\beta } \in \Z_{> 0}$ such that
\begin{equation} \label{bound-vertices}
    |V| \leq N_{\beta } 
\end{equation}
for each generator (\ref{generator2}).

Set $w_h= \sum w_{v,h} + w^{ann0}_h$.
Define $\mathfrak{Gen}(\beta) $ as the subset of $ \mathfrak{Gen}(\beta)^{\dagger}$ obtained imposing the extra condition  
\begin{equation} \label{generator-transversality2}
w_{h} \cap w_{h'} = \emptyset \text{    if    } h \neq h'.
\end{equation}
%The vector space $\mathcal{Z}_{\beta}^{\diamond}$ of $\mathbf{nice \thickspace MC-cycles}$ in the homology class $\beta$  is the formal vector space generated by $\mathfrak{Gen}(\beta)$.

Let $\mathcal{Z}_{\beta} $ be the set of formal power series
$$ \sum_i g_s^{k_i} Gen_i$$
with $Gen_i \in \mathfrak{Gen}(\beta) $, $k_i \rightarrow \infty$, $k_i + |V_i| \geq 0$. 
%\begin{remark}
The last condition is related to the fact we have discarded  the purely closed contributions from the $MC$-cycle. 
%\end{remark}

To define  isotopies of nice-$MC$ cycles $\tilde{Z} $  we consider objects 
\begin{equation} \label{generator2-iso}
(V, H,  ( \tilde{w}_{v,h} )_{v \in V,h \in H} ,  ( \tilde{w}_{h}^{ann} )_{h \in H}   , (  \beta_v )_{v \in V} , [a,b]   )
\end{equation}
where $[a,b] \subset [0,1]$,  $ \tilde{w}_{h,v} : F_v \times [a,b] \rightarrow M$ for some one dimensional compact manifold $F_v$. 
%Set $ \tilde{w}_{h}^{\flat} = \sum_v \tilde{w}_{h,v} $.

Let $\tilde{\mathfrak{Gen}}(\beta)$ be the set of objects (\ref{generator2-iso}) modulo the obvious equivalence relation.

An isotopy of of nice-$MC$ cycles $\tilde{Z} $ is defined by formal power series 
$$\tilde{Z}= \sum_i r_i g_s^{k_i} (V_i, H_i,  ( \tilde{w}_{v,h,i} )_{v \in V_i,h \in H_i} ,  ( \tilde{w}_{h,i}^{ann} )_{h \in H_i}   , (  \beta_v )_{v \in V_i} , [a_i,b_i]   ) ,$$
with $r_i \in \Q$, $k_i \rightarrow \infty$, and $k_i + |V_i| \geq 0$. 

The formal power series are considered modulo gluing of objects (\ref{generator2-iso}).

$\tilde{Z}$ defines a one parameter family of nice $MC$-cycles $(Z^t)_t$ $Z^t \in \mathcal{Z}_{\beta}$ discontinuous for a finite number of times. The discontinuity at the time $t_0$ is obtained by the formula:
\begin{equation} \label{jump2}
 Z^{t_0^+} - Z^{t_0^-} =  \pm r' g_s^{k'} (V', H',  ( w_{v,h}' )_{v \in V',h \in H'} ,( {w_{h}^{ann0}}' )_{h \in H'}, ( \beta_v' )_{v \in V'}  )
  % \{ \tilde{w}_h(t_0) \}_{h \in H \setminus {h_1, h_2}}   .
\end{equation}
where
\begin{itemize}
    \item If  $\tilde{w}_{v_1, h_1,i}$ crosses $\tilde{w}_{v_2, h_2,i}$  transversely at the time $t_0$ we have two cases:
    \begin{itemize}
        \item $v_1 \neq v_2$: $H'= H_i \setminus \{ h_1,h_2 \} $  , $V' = V_i \setminus \{v_1, v_2 \}  \sqcup v_0$, where $v_0$ is a new vertex with associated data $\beta_{v_0}'= \beta_{v_1} + \beta_{v_2}$,  $ w_{ v_0,h}' = w_{ v_1,h,i}^{t_0^- } +  w_{ v_2,h,i}^{t_0^- } $ for each $h \neq h_1, h_2$. All the other data remain  the same.
\item  $v_1=v_2$:  $H'= H_i \setminus \{ h_1,h_2\} $, $V'=V_i$  .  All the other data remain the same.
    \end{itemize}
    \item If $\tilde{w}_{v, h_1,i}$ crosses $\tilde{w}^{ann}_{ h_2.i}$, $H'= H_i \setminus \{ h_1,h_2\} $, $V'=V_i$  .  All the other data remain the same.
\end{itemize}
$r' = r_i$, $k' = k_i+1$ and the sign is defined by the sign of the crossing.

\section{Open Gromov-Witten Partition Function}

\subsection{Quantum Master Equation}

In this section we recall some basic definition related to the (finite dimensional) Batalin-Vilkovski formalism. 
%For more details see (\cite{Co}). 

Fix a super vector space $H$ with an odd symplectic form. Denote by $\mathcal{O}(H)$ the algebra of polynomial functions on $H$.

Let $x_i$, $y_i$ be Darboux coordinates for $H$ with $x_i$ even and $y_i$ odd. Let $\Delta$ be the order two differential operator on $H$ defined by  
$$ \Delta = \partial_{x_i} \partial_{y_i} . $$
The operator $\Delta $ is independent of the choice of basis of $H$.

The bracket on the algebra $\mathcal{O}(H)$ is defined by
$$ \{  f,g  \} = \Delta (fg) - \Delta (f ) g - (-1)^{|f|} f \Delta (g). $$

%Consider a Lagrangian $L$ and a function $H$.
%The fundamental result of the BV formalism states that $\int_L H$ is independent by $L$ if $\Delta H =0$.
%If $H= e^{S/{\hbar}}$, $\Delta H =0$ is equivalent to

Denote by $\mathcal{O}(H)[\hbar]$ the polynomial functions with coefficients in the formal parameter $\hbar$. An even element $S \in \mathcal{O}(H)[\hbar]$ satisfies the quantum master equation if 
$$  \Delta e^{W/\hbar}=0 . $$
This equation can be rewritten as
\begin{equation} \label{masterequation}
\frac{1}{2} \{ W, W \} + \hbar \Delta W =0 .
\end{equation}

We will need to consider also the one parameter family version of the above construction.
Consider the space $\Omega^*([0,1])  \otimes \mathcal{O}(H) [\hbar]$. Extend the operator $\Delta$ to this space acting trivially on $\Omega^*([0,1]) $.
A master homotopy is an even element $ \tilde W \in  \Omega^*([0,1]) \otimes  \mathcal{O}(H) [\hbar] $ such that 
\begin{equation} \label{masterhomotopy}
d \tilde W + \frac{1}{2} \{ \tilde W,\tilde W \} + \hbar \Delta \tilde W =0 . 
\end{equation}
Write  $ \tilde W = W(t) + K(t) dt$ where $W(t)$ and $K(t)$ are elements of $\mathcal{O}(H) [\hbar]$. Equation (\ref{masterhomotopy}) becomes
$$ \frac{1}{2} \{ W(t), W(t) \} + \hbar \Delta W(t) =0     $$
$$  \dot W(t) +  \{ K(t), W(t) \} + \hbar \Delta K(t) =0   .  $$

In the case we are interested in $H$ is the cohomology of a compact oriented three manifold $M$
$$H = H^*(M)[1] .$$ 
The odd symplectic form is induced by the pairing
$$\langle \alpha,  \alpha'  \rangle = (-1)^{|\alpha|} \int_M \alpha \wedge \alpha'  . $$

The formal variable $\hbar $ will be identified with the the formal variable $g_s$ used in Gromov-Witten theory.

%Let $x_i$, $y_i$ be Darboux coordinates for $H^*(M, \mathfrak{g})[1]$ and let $\alpha_i $, $\beta_i$ be the associated basis of $\Psi$. Define $ \psi \in \mathcal{O}(H) \otimes  \Omega^*(M,\mathfrak{g}) $ using 
%\begin{equation} \label{psi}
%\psi = \sum_i x_i \alpha_i + y_i  \beta_i .
%\end{equation}

%Define $K \in \Omega^3(M^2,\pi_1^*\mathfrak{g} \otimes \pi_2^*\mathfrak{g} ) $ as 
%\begin{equation} \label{kappa}
%K =  \sum_i \alpha_i \otimes \beta_i + \beta_i \otimes \alpha_i .
%\end{equation}

%The differential forms $\psi$ and $K$ do not depend on the Darboux coordinates we used.

\subsection{Partition Function}

%For any pair of curves $(\gamma, \gamma')$ in $L$ define  
%$$ \mathbf{P}(\gamma, \gamma') = \int_{(\gamma, \gamma')} P  \text{   and       }  \alpha(\gamma) =\int_{\gamma} \alpha . $$

Given a finite set $H$, denote by $\mathcal{E}(H)$ the set of partitions  of $H$ in sets of cardinality one or two. For $E \in \mathcal{E}(H)$ denote by $E^{ex} \subset E$ the sets of cardinality one, and $E^{in} \subset E$ the sets of cardinality two.

%Let  $E$ be a partition  of $H$ in sets of cardinality one or two. Denote by $E_{ex} \subset E$ the sets of cardinality one, and $E_{in} \subset E$ the sets of cardinality two. For $e \in E_{in} $ let $\{ \gamma_e , \gamma_e' \}$ be the associated pairs of curves (the order does not matter). For $e \in E_{ex}$, let $\gamma_e$ be the associate curve.   To $E$ we associate the real number:
%$$ \mathbf{P}( Z_{ \{ \gamma_h \}_{h \in H} },E) = \frac{1}{|\text{Aut}(E)|} \prod_{e \in E_{ex}} \int_{(\gamma_e, \gamma_e')} P  \times \prod_{e \in E_{ex}} \sum_i x_i \int_{\gamma_e} \alpha_i   $$

%The sum over the set of partitions $E$ as above, gives the partition function of $ \{ \gamma_h \}_{h \in H}$ 
%$$ \mathbf{P}(Z_{ \{ \gamma_h \}_{h \in H} }) =   \sum_E \frac{1}{|\text{Aut}(E)|} \prod_{e \in E_{ex}} \int_{(\gamma_e, \gamma_e')} P  \times \prod_{e \in E_{ex}} \sum_i x_i \int_{\gamma_e} \alpha_i $$

%Let $H \rightarrow E$ be a surjective map, where the preimage for each $e \in E$ is of cardinality two or one. 

To define the partition function of a $MC$-cycle, by linearity,  it is enough to define the contribution of a generator $ (H, ( w_h )_{h \in H}) \in \mathfrak{Gen}(\gamma)$.
%Now let $ (H, \{ \gamma_h \}_{h \in H}) \in \mathfrak{Gen}(\gamma)$. be a generator of $\mathcal{Z}$ as in (\ref{generator1}) . 
%and let $E \in E(H)$. For $e \in E_{in} $ let $\{ \gamma_e , \gamma_e' \}$ be the associated pairs of curves (the order does not matter). For $e \in E_{ex}$, let $\gamma_e$ be the associate curve.  We define
%To $ \{ \gamma_h \}_{h \in H}$  we associate the element 
\begin{equation} \label{partiction1}
\mathfrak{P}((H, \{ w_h \}_{h \in H})) =   \sum_{E \in \mathcal{E}(H)} \frac{g_s^{|E_{in}|}}{|\text{Aut}(E)|} \prod_{( h,h' ) \in E^{in}} \langle P, w_h \times w_{h'} \rangle  \times \prod_{ ( h ) \in E^{ex}} \langle \sum_i x_i \alpha_i, w_h \rangle .
%\mathbf{P}((H, ( \gamma_h )_{h \in H})) =   \sum_{E \in \mathcal{E}(H)} \frac{g_s^{|E_{in}|}}{|\text{Aut}(E)|} \prod_{e \in E^{in}} \mathbf{P}(\gamma_e, \gamma_e')  \times \prod_{e \in E_{ex}} \sum_i x_i \alpha_i(\gamma_e) .
\end{equation} 
In the above formula we consider  $w_h \times w_{h'}$ as a two-chain on $Conf_2(M)$ and we use the usual pairing between differential forms and chains.

%For a $MC$-cycle $Z \in \mathcal{Z}^{MC}$, $\mathbf{P}(Z)$ is defined extending (\ref{partiction1}) by linearity.

Formula (\ref{partiction1}) can be extended  to nice $MC$-cycles including the degrees (\ref{generator2}) as follows.
%$$ (V, H,  ( w_{v,h} )_{v \in V,h \in H} ,  ( w^{ann}_h )_{h \in H}, ( \chi_v , \beta_v )_{v \in V}  )$$
%--Let $G =(V(G), ( H_v )_{v \in V(G)}, E(G), ( \beta_v , \chi_v)_{v \in V(G)})$ be a decorated graph. Let $ V^{ann0}$ be the set of vertices $v \in V(G)$ with $\beta_v=0$, $\chi_v=0$ and $|H_v|=1$. Assume that there are no other area zero vertices besides $ V^{ann0}$.   Assume that there exists an identification $V(G) \setminus V^{ann}= V$ compatible with the data $\beta_v, \chi_v$. Set $w_{h,v_h} = w_h^{ann} $ if $v_h \in V^{ann0}$,
In this case the partition function is written as an expansion on decorated graphs  
 $$G =(V(G), ( H_v )_{v \in V(G)}, E(G), (\beta_v )_{v \in V(G)} , V^{ann0})$$ 
 where  $V^{ann0} \subset V(G)$ is a special set of vertices and the other data are defined as usual. 
 We assume that for each $v \in V^{ann0} $ we have $\beta_v=0$ and $|H_v|=1$ and that there are no other area zero vertices besides $ V^{ann0}$.   Assume also that there exists an identification $V(G) \setminus V^{ann0}= V$ compatible with the data $(\beta_v)_v$.
For each $h$, denote with $v_h$ the vertex with $h \in H_{v_h}$. Set 
$w_{h,v_h} = w_h^{ann} $ if $v_h \in V^{ann0}$,
The contribution to the partition function associated to $G$ is obtained  by the formula
\begin{multline} \label{contribution-graphs}
    \mathfrak{P}( G, (V, H,  ( w_{v,h} )_{v \in V,h \in H} ,  ( w^{ann}_h )_{h \in H}, (  \beta_v )_{v \in V}  )) =  \\
    \sum_{s}   \frac{g_s^{|E^{in}(G)| } } {|\text{Aut}(G)|} \prod_{\{ h,h' \} \in E^{in}} \langle P,w_{s(h),v_h}  \times w_{s(h'),v_{h'}}  \rangle   \times \prod_{ \{ h \} \in E^{ex}} \langle \sum_i x_i \alpha_i, w_{s(h),v_h} \rangle.
\end{multline}
where the sum is made  on the set of bijective maps $s : \sqcup_v H_v  \rightarrow H$.

From (\ref{bound-vertices}) we obtain
$$ \mathfrak{P}(Z) \in  \frac{1}{g_s^{N_{\beta}}} \R[[g_s,x]] $$
if $Z \in \mathcal{Z}_{\beta}$.

\subsection{Isotopies} \label{isotopies-section}

We now extend (\ref{partiction1}) to isotopies of $MC$-cycles, showing that isotopic $MC$-cycles leads to isotopic solutions of the quantum master equation. At the same time we show that different choice of the propagator leads to isotopic solutions of the quantum master equation.
%We also include an isotopy $\tilde{P}$ of the propagator.

%For $\tilde{\gamma}:F \times [0,1] \rightarrow M $, isotopy of a one dimensional sub-manifold, define
%$$\alpha(\tilde{\gamma}) = (\pi_F)_* (\tilde{\gamma})^*  \alpha $$
%For an isotopy of pair of curves $(\tilde{\gamma}, \tilde{\gamma}'):F \times F \times [0,1] \rightarrow \text{Conf}_2(M) $, define
%$$  \mathbf{P}(\tilde{\gamma}, \tilde{\gamma}') = (\pi_{F \times F})_*(\tilde{\gamma}, \tilde{\gamma}')^* (\tilde{P}) ,$$
%where $\pi_{F \times F} : F \times F \times [0,1]  \rightarrow F \times F $ is the projection. 

%---

Let $ ( H , \{ \tilde{w}_h  \}_{h \in H} , [a,b] ) $  be a generator of $MC$-isotopies  as in (\ref{generator-nice-iso}) . Let $\tilde{P}$ an isotopy of propagators as in Lemma \ref{parameter-propagator-lemma}.
Define the one form 
$$\mathfrak{P}(( H , \{ \tilde{w}_h  \}_{h \in H} , [a,b] )) \in \Omega^1([a,b])  \otimes \R[x,y] $$  
as
\begin{equation} \label{partiction1-isotopy}
\mathfrak{P}((H, \{ \tilde{w}_h \}_{h \in H}, [a,b])) =   \sum_{E \in \mathcal{E}(H)} \frac{g_s^{|E_{in}|}}{|\text{Aut}(E)|} \prod_{\{ h,h' \} \in E^{in}} \langle \tilde{P}, \tilde{w}_h \times \tilde{w}_{h'} \rangle  \times \prod_{ \{ h \} \in E^{ex}} \langle \sum_i x_i \alpha_i, \tilde{w}_h \rangle .
\end{equation} 
In the formula above we consider $ \tilde{w}_h \times \tilde{w}_{h'}$ as a two chain on $\text{Conf}_2(M)  \times [0,1]$. The pairing appearing in the formula gives a differential form on $[0,1]$, defined using the push-forward of differential forms.

%Let $ \{ \tilde{\gamma}_h(t) \}_{h \in H ,  a \leq t \leq b}$  be a generator of an isotopy in $MCH$ as in (\ref{generator1-family}). 
%Define the one form $\mathbf{P}(\tilde{Z}_{ \{ \tilde{\gamma}_h(t) \}_{h \in H ,  a \leq t \leq b} }) \in \Omega^1([a,b])$ as
%\begin{equation} \label{partiction1-isotopy}
%\mathbf{P}(\tilde{Z}_{ \{ \tilde{\gamma}_h(t) \}_{h \in H ,  a \leq t \leq b} }) =  \sum_{E \in E(H)} \frac{g_s^{|E_{in}|}}{|\text{Aut}(E)|} \prod_{e \in E_{in}} \mathbf{P}(\tilde{\gamma}_e, \tilde{\gamma}_e')  \times \prod_{e \in E_{ex}} \sum_i x_i \alpha_i(\tilde{\gamma}_e) , 
%\end{equation} 

We observe that  (\ref{partiction1-isotopy}) is not continuous but it jumps when a pair $(\tilde{w}_h ,\tilde{w}_{h'})$ crosses each other. This jump is compensate by the jump (\ref{jump}) appearing in the definition of $MC$-isotopy:
\begin{proposition} Let $\tilde{Z}$ be an isotopy of $MC$-cycles. $\mathfrak{P}(\tilde{Z})$ is a continuos piecewise smooth differential form  $\Omega^*([0,1]) \otimes \R[x,y]$.  The quantum master equation holds: 
\begin{equation} \label{master}
 d_t \mathfrak{P}(\tilde{Z}) + g_s \Delta \mathfrak{P}(\tilde{Z}) =0 .
\end{equation}
\end{proposition}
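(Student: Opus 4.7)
The plan is to establish the two claims of the proposition --- continuity of $\mathfrak{P}(\tilde Z)$ and the master equation $d_t\mathfrak{P}(\tilde Z)+g_s\Delta\mathfrak{P}(\tilde Z)=0$ --- by analyzing the generating formula (\ref{partiction1-isotopy}) at smooth times and at the crossing times separately. The central input is the property of the propagator isotopy $\tilde P$ from Lemma \ref{parameter-propagator-lemma}: on $\text{Conf}_2(M)\times[0,1]$, $d\tilde P$ represents the diagonal class, equal to $\sum_i\alpha_i\wedge\alpha^i$ away from the diagonal, where $\{\alpha_i\},\{\alpha^i\}$ are dual bases of $H^*(M)$ under the Poincar\'e pairing.

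For continuity, I would analyze each pairing $\langle \tilde P, \tilde w_h\times\tilde w_{h'}\rangle$ in (\ref{partiction1-isotopy}) as a function of $t$. It is smooth except at transverse crossing times $t_0$, where the $2$-chain $\tilde w_h(t)\times\tilde w_{h'}(t)$ crosses the diagonal of $M^2$; the Thom-form behavior of $\tilde P$ forces the pairing to jump by the intersection sign $\pm 1$. Reorganizing (\ref{partiction1-isotopy}) by grouping partitions $E\in\mathcal{E}(H)$ according to whether $\{h,h'\}\in E^{in}$, the induced jump of $\mathfrak{P}((H,\tilde w,[a,b]))$ at $t_0$ equals $\pm g_s\cdot\mathfrak{P}((H\setminus\{h,h'\}, w^{t_0}))$; the extra factor of $g_s$ encodes the weight of the jumping internal edge. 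This exactly cancels, with opposite sign, the compensating jump built into $\tilde Z$ by (\ref{jump}). Summing over generators of $\tilde Z$ yields continuity.

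For the master equation, I would apply $d_t$ termwise to (\ref{partiction1-isotopy}) at a smooth time. Using that each $\tilde w_h(t)$ is a closed $1$-chain in $M$, Stokes on the $3$-chain $\tilde w_h\times_{[a,b]}\tilde w_{h'}\subset\text{Conf}_2(M)\times[a,b]$ rewrites
\[
d_t\langle \tilde P,\tilde w_h\times\tilde w_{h'}\rangle = \langle d\tilde P,\tilde w_h\times\tilde w_{h'}\rangle = \sum_i\langle\alpha_i,\tilde w_h\rangle\langle\alpha^i,\tilde w_{h'}\rangle.
\]
This is precisely the combinatorial output of converting an internal edge $\{h,h'\}\in E^{in}$ into two external legs and contracting the associated variables $x_i$ via the Poincar\'e pairing. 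Conversely, $g_s\Delta$ acts on (\ref{partiction1-isotopy}) as the inverse operation: it picks two external legs $\{h\},\{h'\}\in E^{ex}$, drops their $\langle\sum_j x_j\alpha_j,\cdot\rangle$ factors, and inserts an internal edge with propagator factor $\sum_i\alpha_i\otimes\alpha^i$ along with an extra $g_s$. Matching the automorphism factors $|\text{Aut}(E)|$ and the signs between these two operations produces $d_t\mathfrak{P}(\tilde Z)=-g_s\Delta\mathfrak{P}(\tilde Z)$.

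The main obstacle is the interplay between the smooth-interior analysis and the distributional behavior at crossings: the diagonal contribution of $\tilde P$ is what produces \emph{both} the master equation in the interior and the jumps at crossings, so these two roles must be cleanly separated in the argument. A secondary but essential technicality is careful tracking of the combinatorial automorphism factors and the odd-symplectic signs under toggling of edges between internal and external status; the precise form of (\ref{partiction1-isotopy}) is effectively dictated by the requirement that this bookkeeping work out.
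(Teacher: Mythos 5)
Your proposal follows essentially the same route as the paper: continuity comes from the $\pm 1$ jump of $\langle\tilde P,\tilde w_h\times\tilde w_{h'}\rangle$ at a transverse crossing (property (\ref{propagator-singularity}) of the propagator) cancelling against the built-in jump (\ref{jump}) of the isotopy, and the master equation comes from the formula for $d\tilde P$ in Lemma \ref{parameter-propagator-lemma} converting an internal edge into a pair of contracted external legs. The paper's proof states exactly these two ingredients and omits the combinatorial bookkeeping that you spell out; your version is a correct elaboration of the same argument.
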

\begin{proof}
From (\ref{jump}), the continuity of $\mathfrak{P}(\tilde{Z})$ is equivalent to the relation
$$ \langle \tilde{P}, \tilde{w}_h(t_0^-) \times  \tilde{w}_{h'}(t_0^-)  \rangle =  \langle \tilde{P}, \tilde{w}_h(t_0^+) \times  \tilde{w}_{h'}(t_0^+)  \rangle   \pm 1 .$$
%$$\int_{(\gamma(t_0^-), \gamma'(t_0^-))} P = \int_{(\gamma(t_0^+), \gamma'(t_0^+))} P  \pm 1 .$$
This follows from propriety (\ref{propagator-singularity}) of the propagator.
 
The quantum master  equation is an immediate consequence of the formula for $d \tilde P$
%(\ref{propagator-differential}) 
 of Lemma \ref{parameter-propagator-lemma}.
\end{proof}

%----

%$$  \tilde{Z}_{ \{ \tilde{\gamma}_h(t) \}_{h \in H ,  a \leq t \leq b} } =  \sum_E \frac{1}{|\text{Aut}(E)|} \prod_{e \in E_{ex}} \int_{(\tilde{\gamma}_e, \tilde{\gamma}_e')} P  \times \prod_{e \in E_{ex}} \sum_i  \int_{\tilde{\gamma}_e} ( x_i \alpha_i + y_i \beta_i)  $$

%Now consider a one parameter family:
%\begin{equation} \label{partiction-isotopy}
% \{ \tilde{\gamma}_h \}_{h \in H} \rightarrow \sum_E \frac{1}{|\text{Aut}(E)|} \prod_{e \in E_{ex}} \int_{(\tilde{\gamma}_e, \tilde{\gamma}_e')} P  \times \prod_{e \in E_{ex}} \sum_i  \int_{\tilde{\gamma}_e} ( x_i \alpha_i + y_i \beta_i) 
%\end{equation}

%The integrals appearing in the above formula should be suitable interpreted : 
%$$\int_{(\tilde{\gamma}_e, \tilde{\gamma}_e')} P = (\pi_{F \times F})_*(\tilde{\gamma}_e, \tilde{\gamma}_e')^* (P)$$
%where in the left side $(\tilde{\gamma}_e, \tilde{\gamma}_e')$ is considered as a map $F \times F \times [0,1] \rightarrow \text{Conf}_2(M) $ and the push forward is with respect the projection to the interval $[0,1]$.  Similarly  
%$$\int_{\tilde{\gamma}_e} ( x_i \alpha_i + y_i \beta_i) = (\pi_F)_* (\tilde{\gamma}_e)^*( x_i \alpha_i + y_i \beta_i) $$
%where in the left side $\tilde{\gamma}_e$ is considered as a map $F \times F \times [0,1] \rightarrow M $ and the push forward is with respect the projection to the interval $[0,1]$.

\subsection{Open Gromov-Witten Potential}

%We consider families of $MC$-cycles $Z^{\epsilon}$ such that, for each $G$, $Z_G^{\epsilon}$ converges as a current.  We declare 
%$$ Z_1^{\epsilon} \sim Z_2^{\epsilon} \text{   if   } \exists \tilde{Z}^{\epsilon} \text{  s.t. } \hat{\partial}\tilde{Z}^{\epsilon}= Z_1^{\epsilon}-Z_2^{\epsilon}, \tilde{Z}^{\epsilon} \mapsto 0 .$$

%We consider the configuration space of $n$ the points $\text{Conf}_n(L)$. In the abelian case that we are interested in we can use a simplified version
%$$  \text{Conf}'_n(L) = \{ \{p_i \}_{0 \leq i \leq n} , \{ p_{i,j} \}_{0 \leq i \leq j \leq n} | \pi_1 (p_{i,j})= p_i,  \pi_2 (p_{i,j})= p_j \text{  for every  } i,j \} .$$

%There is an obvious map
%$$ \pi: \text{Conf}'_n(L)  \rightarrow L^n  $$

%An element (\ref{generator1}) can be seen as a closed submanifold of  $\text{Conf}'_n(L) $. We can extend the set of generators  of $MCH$ taking any limit of a sequence of elements (\ref{generator1}). These are the closed subsets of $\text{Conf}'_n(L) $ such that the image throughout  $ \pi$ gives a collection of curves as  (\ref{generator1})  but dropping the requirement that the curves have to be pairwise disjoint.

%---

%$$Z^n= \sum_i  g_s^{k_i}   r_i ( H_i^n , ( w_{h,i}^n  )_{h \in H_i} ) ,$$

To define the Open Gromov-Witten potential we need to require that the collection of $MC$-cycles $(Z_{\beta})_{\beta}$ satisfy  the factorization property that we now state. 

\subsubsection{Extended $MC$-cycles.}

To state the factorization property we need to extend the definition of nice $MC$-cycle. 

Given a closed one chain $w$, we say that $w'$ is $\epsilon$-close to  $w$ if we can write $w'= w'_1 + w'_0 $, with $w'_1$ $\epsilon$-close  in the $C^0$-topology to $w$ and $w'_0$
 $\epsilon$-close in the $C^0$-topology to a chain whose support is an union of points.

We say that a $MC$-cycle $Z$ is $\epsilon$-close to a finite set  $\{   w^j \}_j$ of closed one chains 
 if  each $w_h$ appearing in expansion of $Z$ is   
 $\epsilon$-close to an element of $\{   w^j \}_j$.

Given a sequence of nice $MC$-cycles  $Z^n \in \mathcal{Z}$  we write 
  $$\lim supp(Z^n) =  \{ w^j\}_j $$
  if for each $\epsilon>0$, $Z^n$ is $\epsilon$-close to $\{ w^j\}_j$ for $n \gg 0$, and  $\{ w^j\}_j$ is the minimal set with this property.

Write 
$ Z^n= \sum g_s^i Z^n_i .$
We say  that $Z$ has finite limit support if $Z_i$ has finite limit support for each $i$.

 An extended  nice $MC$-cycle consists in a sequence of nice $MC$-cycles 
   $Z= (Z^n)_n$  such that 
\begin{itemize}
    \item the limit support is finite;
    \item there exists $\tilde{Z}^{int, n}$ isotopy between $Z^n $ and $Z^{n-1}$ with 
 $$ \lim Supp( \tilde{Z}_{}^{int}) =   \lim Supp(Z_{})  .$$
 %\qquad \forall .$$
\end{itemize}

Two extended $MC$-cycles $Z^{0,n}, Z^{1,n}$ are said equivalent if there exists a sequence of isotopies
$(\tilde{Z}^{ n})_n$ between them with 
$$ \lim Supp( \tilde{Z}) =   \lim Supp(Z^0) =  \lim Supp(Z^1)  .$$

%---

%A generator is a sequence $(Gen_k)_k$ of elements (\ref{generator2}),
%$$(V, H,  ( w_{v,h,k} )_{v \in V,h \in H} ,  ( w^{ann,k}_h )_{h \in H}, (  \beta_v )_{v \in V}  )$$
%$ Gen_k \in \mathfrak{G}(\beta)$,  such that $Gen_{k_1}$ is isotopic to $Gen_{k_2}$ for each $k_1 \neq k_2$ with isotopy with support convegenging to a one dimension. RIVEDERE. and such that $(w_{v,h,k})_k $ and $ (w^{ann,k}_h)_k$ converge in the $C^0$ topology for each $v,h$. We say that $(Gen_k)_k \sim (Gen_k')_k$ if $Gen_k$ is isotopic to $Gen_k'$ with isotopy of shrinking support.

\subsubsection{Factorization property}

The factorization  map 
$$\mathfrak{fact}_{\beta_1, \beta_2} : \mathcal{Z}_{\beta} \rightarrow \mathcal{Z}_{\beta_1} \otimes  \mathcal{Z}_{\beta_1} $$
is defined on the generators (\ref{generator2}) as 
\begin{multline}
\mathfrak{fact}_{\beta_1, \beta_2} ( (H,V,  ( w_{h,v}  )_{h \in H , v \in V }, ( \beta_v )_{v \in V} ) ) =  \\
\sum_{ \substack{V_1 , V_2| V_1 \sqcup V_2 = V, \beta(V_1)= \beta_1 ,  \beta(V_2)= \beta_2  \\   H_1,H_2| H_1 \sqcup H_2 = H } }   (V_1,H_1,  ( w_{h,v}  )_{h \in H_1 , v \in V_1 }, ( \beta_v )_{v \in V_1} ) \otimes  (V_2,H_2,  ( w_{h,v}  )_{h \in H_2 , v \in V_2 }, ( \beta_v )_{v \in V_2} ).
\end{multline}
In the formula we have used the notation   $\beta(V') = \sum_{v \in V'} \beta_v$ if $V' \subset V$.

We say that $(Z_{\beta})_{\beta}$ satisfy the factorization property if
\begin{equation} \label{nice-factorization-eq}
\mathfrak{fact}_{\beta_1, \beta_2}(Z_{\beta_1 + \beta_2}) =  Z_{\beta_1} \otimes Z_{\beta_2} .
\end{equation}
for each $\beta_1, \beta_2$.

\begin{proposition} \label{prop-factorization}
The $MC$-cycle $Z$ of Theorem \ref{main-theorem} can be taken such that it satisfies the factorization property (\ref{nice-factorization-eq}). 
%To the $Z$ of Theorem() can be associated an element of the extended MCH that satisfy the factorization property in the strong sense, unique up to isotopy.
\end{proposition}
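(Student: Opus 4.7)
The plan is to refine the construction of Theorem~\ref{main-theorem} by choosing Kuranishi structures and multi-sections on the moduli spaces of multi-curves inductively in $\omega(\beta)$, in a manner compatible with the natural product decompositions. For any splitting $\beta = \beta_1 + \beta_2$ with $\omega(\beta_i) > 0$, the moduli space of multi-curves in class $\beta$ contains a canonical product stratum parametrizing disconnected multi-curves whose components partition into a $\beta_1$-piece and a $\beta_2$-piece, on which the Kuranishi structure is literally the product of the Kuranishi structures of the two factors.

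The inductive step proceeds as follows. Assume that perturbations have been fixed for all $\beta'$ with $\omega(\beta') < \omega(\beta)$. At the stage for $\beta$, choose the perturbation on the moduli space in class $\beta$ so that its restriction to each product stratum equals the product of the already-chosen perturbations on the factors. With such product-compatible perturbations, each generator (\ref{generator2}) of $Z_\beta$ coming from a multi-curve that splits as $V = V_1 \sqcup V_2$, $H = H_1 \sqcup H_2$ with $\beta(V_i) = \beta_i$ is, by construction, the tensor product of a generator of $Z_{\beta_1}$ and one of $Z_{\beta_2}$. Summing over all such partitions produces exactly the right-hand side of (\ref{nice-factorization-eq}).

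Running the same inductive scheme on one-parameter families of Kuranishi data yields isotopies that themselves enjoy the factorization property, which shows that the resulting collection $(Z_\beta)_\beta$ is well defined as an equivalence class of extended $MC$-cycles, in the sense of the notion introduced just before the statement. The purely closed contributions that are excluded in Section~\ref{nice-MC2-section} play no role in this argument, since the splitting (\ref{nice-factorization-eq}) is manifestly on the open part of the moduli space.

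The main obstacle is the simultaneous consistency of the product-compatibility constraints across all decompositions of $\beta$: a single class typically admits many splittings $\beta = \beta_1 + \beta_2$, and the corresponding product strata overlap (for instance, the stratum associated to $\beta = \beta_1 + \beta_2 + \beta_3$ lies inside three distinct binary product strata). This is handled by performing the extension stratum-by-stratum, starting with the deepest strata (maximal decomposition into summands) and extending outward; the associativity of the product of Kuranishi data guarantees that the constraints imposed at the different strata are mutually compatible on overlaps, so the standard extension technique of \cite{OGW3} produces a coherent perturbation on all of the multi-curve moduli space of class $\beta$.
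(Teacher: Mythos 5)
Your overall strategy --- build the perturbations inductively so that they respect the product decomposition of the moduli space of disconnected multi-curves --- is the same one the paper uses in Section \ref{factorization-property-section}, but two of your key assertions hide genuine difficulties that the paper's proof is organized around. First, the locus of multi-curves splitting into a $\beta_1$-piece and a $\beta_2$-piece does \emph{not} carry ``literally the product'' structure: by (\ref{space-fact}) the space $\overline{\mathcal{M}}_{G_1\sqcup G_2,m_1\sqcup m_2}$ is a quotient of $\overline{\mathcal{M}}_{(G_1,m_1),(G_2,m_2)}$ by the extra automorphisms $\text{Aut}(G_1\sqcup G_2,m_1\sqcup m_2)/\text{Aut}((G_1,m_1),(G_2,m_2))$, and, more seriously, the simplex factor $\Delta^l$ carried by a decorated graph $(G,m)$ is not a product of simplices; the product $Z_{\beta_1}\boxtimes Z_{\beta_2}$ of $MC$-chains is defined by the sum (\ref{cup-formula}) over splittings $m^1_{[0,r]}$, $m^2_{[r,l]}$, and matching it with $\mathbf{fact}_{\beta_1,\beta_2}(Z_\beta)$ requires the prism decomposition (\ref{fact-symplex}) of $\Delta^l$ and the auxiliary moduli spaces $\overline{\mathcal{M}}_{(G_1,m_1),(G_2,m_2)}$. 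This is exactly the content of Lemma \ref{lemma-factorization-fondamental}, which is absent from your argument; without it, ``the generator is by construction a tensor product'' is not established even for product-compatible perturbations.

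Second, you cannot in general demand that the perturbation on $\overline{\mathcal{M}}_{G,m}$ restrict \emph{exactly} to a product on every splitting stratum while remaining transversal and compatible with the corner-face identifications (\ref{corner-faces-multi-interp0}): these constraints overdetermine the multisection, and your ``deepest stratum outward'' extension with exact product compatibility on all overlapping binary strata is the step that would fail. The paper threads this needle by (i) imposing the literal product condition $\mathfrak{s}_G=\prod_i\mathfrak{s}_{G_i}$ only for the decomposition of $G$ into connected components, (ii) requiring the obstruction bundle of $\overline{\mathcal{M}}_{G,m}$ to contain that of $\overline{\mathcal{M}}_{cut_E(G)}$ so that the cut perturbations induce a reference section $\mathfrak{s}_{G,m}^{\dagger}$, and (iii) asking only that $\mathfrak{s}_{G,m}$ be $C^0$-close to $\mathfrak{s}_{G,m}^{\dagger}$, checking that these weaker conditions are still compatible with the boundary conditions of the inductive construction of \cite{OGW3}. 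You would need to replace your exact product-compatibility requirement by some such calibrated set of conditions, and then still invoke (an adaptation of) Lemma \ref{lemma-factorization-fondamental} to convert isotopy into the strict equality (\ref{nice-factorization-eq}).
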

This is proved in Section \ref{factorization-property-section}.

%We can take the limit such that $\gamma_i \rightarrow \gamma$ for each $i$. 

%We can write $Z$ in terms of more refined data (\ref{generator2}). Let $G$ be the data of a partiction $E$ of $H$ as before, plus a map $H \rightarrow V$. For $e =\{ h,h'  \} \in E(G)$ define $\gamma_e= \gamma_{h,v_h}$, where $v_h$ is the image of $h$ in $V$, and  $\gamma_e'= \gamma_{h',v_{h'}}$. Define   
%$$ Z_G=    \frac{1}{|\text{Aut}(G)|} \prod_{e \in E_{ex}(G)} \int_{(\gamma_e, \gamma_e')} P  \times \prod_{e \in E_{ex}(G)} \sum_i x_i \int_{\gamma_e} \alpha_i .$$
%We have $Z = \sum_{G} Z_G $.

\subsubsection{Open Gromov-Witten Potential}

Set
\begin{multline} \label{potential-generator}
   \mathfrak{W}  (V, H,  ( w_{v,h} )_{v \in V,h \in H} ,  ( w^{ann}_h )_{h \in H}, ( \beta_v )_{v \in V}  ) = \\
   g_s \sum_{G \text{   connected}} \mathfrak{P}( G, (V, H,  ( w_{v,h} )_{v \in V,h \in H} ,  ( w^{ann}_h )_{h \in H}, ( \beta_v )_{v \in V}  )) 
\end{multline}

For a $MC$-cycle $Z_{\beta}$, $ \mathfrak{W} (Z_{\beta})$ is defined extending (\ref{potential-generator}) by linearity.   We have
$$\mathfrak{W}(Z)  \in \R[[g_s,x]].$$

%-We consider the open partition function which is obtained quotienting the open-closed with the closed.

%For $Gen \in \mathfrak{Gen}(\beta) $ set 
%$$ \mathfrak{P}^{con}(Gen) =\sum_{G \text{   connected} } \mathfrak{P}( G,Gen).$$

%For a nice $MC$-cycle 
%$$Z=  \sum_i g_s^{k_i} Gen_i$$
%with $Gen_i \in \mathfrak{Gen}(\beta) $, $k_i \rightarrow \infty$, and $k_i + |V_i| \geq 0$, set
%$$   \mathfrak{W}(Z_{\beta}) =  g_s \sum_i g_s^{k_i} \mathfrak{P}^{con}( Gen_i)  \in \R[[g_s,x]] . $$

Let $(Z_{\beta})$ be a collection of $MC$-cycles which satisfyies the factorization property.
Consider the Novikov Ring with formal variable $T$. Set 
$$  \mathfrak{P}((Z_{\beta})_{\beta}) = \sum_{\beta}  \mathfrak{P}(Z_{\beta}) T^{\omega(\beta)},$$
$$  \mathfrak{W}((Z_{\beta})_{\beta}) = \sum_{\beta}  \mathfrak{W}(Z_{\beta}) T^{\omega(\beta)}.$$

The factorization property implies 
$$ \mathfrak{P}(Z) =  \text{exp} (\frac{1}{g_s}\mathfrak{W}(Z)  ).$$
%with 
%$$\mathfrak{W}(Z)  \in \R[[g_s]].$$

If $\tilde Z$ is an isotopy of $MC$-cycles which satisfies the factorization property,
 the master equation (\ref{master}) implies 
$$   d  \mathfrak{W} (\tilde{Z}) + \frac{1}{2} \{ \mathfrak{W} (\tilde{Z}),  \mathfrak{W} (\tilde{Z})   \}  + g_s \Delta   \mathfrak{W} (\tilde{Z}) =0 .  $$

\subsection{Numerical Invariants}

Let $W = \sum_{i \geq 0} g_s^{i} W_{i}$ a solution of the quantum master equation. 

Let $x_0 \in H^1(M, \Lambda_0)$ be a critical point of $W_0$:
$$ \frac{\partial W_0 }{\partial x} (x_0) =0 .$$
We assume that $x_0$ is isolated and not degenerated, i.e., the matrix 
$$  \partial_i \partial_j W_0(x_0)  $$
admits an inverse 
$$ \Delta_{i,j} := ( \partial_i \partial_j W_0(x_0))^{-1}  .$$

%is the inverse propagator and 
%$$ \partial_{I_1} \partial_{I_2} ... \partial_{I_k} W_{-\chi} . $$

%In this section we associate to $x_0$ an element of $\Lambda((g_s))$ that is invariant by isotopy. 

We considered decorated graphs $G$, where its vertex $v$ is decorated with an integer number 
$\chi_v \leq 1$, called its Euler Characteristic.   The Euler Characteristic of the graph is defined as $\chi(G) := \sum_v \chi_v - E^{in}(G)$.

To a decorated graph $G$ we associate  the contribution $Cont_G$ defined as follows: 
 we put the tensor $\partial^{H_v}W_{1-\chi_v}$ on the vertex $v$ and the propagator  $\Delta$ on every edge ($H_v$ denotes the set of half-edges attached to $v$).  $Cont_G$ is defined as the contraction of all these tensors.

Define the effective action 
$$ W^{eff} = \sum_{G}  \frac{g_s^{1-\chi(G)}}{\text{Aut}(G)}Cont_G .$$
The sum run over all the connected decorated graphs.
%The sum is over all decorated graphs $G$, to each vertex $v$ is assigned a integer $\chi_v \leq 1$ called Euler characteristic. $\chi(G) = \sum_v \chi_v + .....$

%To define the contribution $Cont_G$ of the graph $G$, we put the tensor $\partial^{H(v)}W_{\chi_v}$ on the vertex $v$, and the propagator  $\Delta$ on every edge. Then $Cont_G$ is defined to be the contraction of all these tensors.

%---

%The element $W^{eff}_{\Gamma} \in \C[\Lambda]$ is the contribution of the graph $G$, that is given by a product of factors
%\begin{itemize}
%\item  $\frac{ \partial^{| H(v) |} W_{\chi_v}} {\partial x^{| H(v) |}  }(x_0)$ for each $v \in V(G)$
%\item $\Delta_{i_h,i_{h'}}$ for each edge $\{h,h'  \} \in E(G)$.
%\end{itemize}

%---

%A graph $\Gamma$ is defined by a set $ (H,V, \sigma, \pi, \tilde{\chi})$ 

%The invariant we want to define is given by a sum over graphs
%$$ \tilde{a} = \sum_{\Gamma \in \mathcal{\Gamma}_{\chi}} \tilde{a}_G  .$$
%The element $\tilde{a}_{\Gamma} \in \C[\Lambda]$ is the contribution of the graph $G$, that is given by a product of factors
%\begin{itemize}
%\item  $\frac{ \partial^{| H(v) |} W_{\chi_v}} {\partial x^{| H(v) |}  }(x_0)$ for each $v \in V(G)$
%\item $\Delta_{i_h,i_{h'}}$ for each edge $\{h,h'  \} \in E(G)$.
%\end{itemize}

%The formula is given by:
%$$\tilde{a}_{\Gamma}  = \left( \prod_{v \in E(\Gamma)} \frac{ \partial^{| H(v) |} W_{\chi_v}} {\partial x  } \right) \times  \Delta^{|E(\Gamma)|} $$

\subsubsection{Isotopies}

Let $\tilde{W}=  \sum_{i \geq 0} g_s^{i} \tilde{W}_{i}$ be an isotopy of solutions of quantum master equations.

The classical term involving  satisfies the equation:
\begin{equation} \label{QME-explicit-0}
\frac{\partial W_0 }{\partial t} +  \frac{\partial W_0 }{\partial x} \frac{\partial K_0 }{\partial y} =0.
\end{equation}
%The last term in the right is absent for $n=0$. 
 $x_0$  dependence on $t$ since $W_0$ does it. From (\ref{QME-explicit-0}) we have: 
\begin{equation}    \label{critical-derivative}
    \frac{d x_0}{dt} = \frac{\partial K_0}{\partial y}
\end{equation}  
From  equations ( \ref{critical-derivative}) and  (\ref{QME-explicit-0}) we obtain the derivative of the propagator $\Delta$:
\begin{equation} \label{delta-derivative}
 \frac{d}{dt} \Delta = -  \frac{\partial K_0}{\partial y \partial x}(x_0)  \Delta.
\end{equation}

Equations  ( \ref{critical-derivative}) and  (\ref{QME-explicit-0}) yield also to
$$ \frac{d}{dt} [W_0(x_0)] = 0 ,$$
i.e., $W_0(x_0)$ is an invariant of the isotopy class of the solution of the classical Master Equation.

Using some standard graphs technique, we can extended the last claim to higher loops : 
\begin{proposition}
%$\tilde{W}_{\chi}$ does not depends on $t$.
$W^{eff}$ does not depends on $t$.
\end{proposition}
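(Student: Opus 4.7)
The plan is to differentiate $W^{eff}$ in $t$ and show the result vanishes by exploiting the master homotopy equation (\ref{masterhomotopy}) for $\tilde W = W(t) + K(t)\, dt$ together with the auxiliary identities (\ref{critical-derivative}) for $\dot x_0$ and (\ref{delta-derivative}) for $\dot\Delta$. The strategy is the standard BV effective-action argument: reorganize $\frac{d}{dt}W^{eff}$ as a graph sum and match it against an auxiliary ``$K^{eff}$'' graph sum in which one vertex carries $K$ instead of $W$.

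First I would pull the $t$-derivative inside each graph contribution $Cont_G$. Three families of terms arise:
(a) the explicit $t$-derivative $\partial^{H_v}\dot W_{1-\chi_v}(x_0)$ at a vertex $v$;
(b) the $t$-derivative of the evaluation point, $\dot x_0 \cdot \partial_x\partial^{H_v} W_{1-\chi_v}(x_0)$, which by (\ref{critical-derivative}) equals $\partial_y K_0 \cdot \partial_x \partial^{H_v} W_{1-\chi_v}(x_0)$;
(c) the derivative of the propagator on each edge, which by (\ref{delta-derivative}) equals $-\partial_x\partial_y K_0(x_0)\,\Delta$.

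Next I would apply the $g_s^{1-\chi_v}$ component of the master homotopy to (a), obtaining
\[
\dot W_{1-\chi_v} = -\sum_{i+j=1-\chi_v}\{K_i, W_j\} - \Delta K_{-\chi_v}.
\]
Using $\{f,g\} = \Delta(fg) - \Delta(f)g - (-1)^{|f|} f\Delta(g)$, the $\{K_i, W_j\}$ piece becomes a pair of contracted tensors that one recognizes as two vertices joined by a new propagator $\Delta$; the $\Delta K_{-\chi_v}$ piece gives a $K$-vertex with a self-loop. Define
\[
K^{eff} = \sum_{G \text{ connected, one marked vertex}}\frac{g_s^{-\chi(G)}}{|\mathrm{Aut}(G)|}\,Cont_G^{K},
\]
where the marked vertex carries $K_{1-\chi_v}$ and all others carry $W_{1-\chi_v}$. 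The combinatorial claim is that (a), (b), (c) recombine to give
\[
\frac{d}{dt} W^{eff} = \{K^{eff}, W^{eff}\} + g_s\,\Delta K^{eff}.
\]
Here (b) and (c) cancel against the classical $\{K_0, W_j\}$ terms produced by (a): the $\partial_y K_0 \cdot \partial_x W$ contractions of (b) provide the missing ``tree half'' that glues a $K_0$-vertex to a $W$-vertex, while (c) supplies the remaining ``bivalent $K_0$-insertion'' that rescales an internal propagator. What survives is precisely the graph sum $\{K^{eff}, W^{eff}\} + g_s \Delta K^{eff}$.

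Finally, since both $W^{eff}$ and $K^{eff}$ are obtained by evaluating every vertex tensor and every propagator at the critical point $x_0$, they lie in $\R[[g_s]]$ with no free $x,y$ dependence. Therefore $\{K^{eff}, W^{eff}\} = 0$ and $\Delta K^{eff} = 0$, so $\dot W^{eff} = 0$.

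The main obstacle is the combinatorial bookkeeping in the middle step: verifying that symmetry factors, signs, and multiplicities add up correctly when a single graph $G'$ in $\{K^{eff}, W^{eff}\}$ or in $\Delta K^{eff}$ is produced from several different graphs $G$ via the three derivative sources (a), (b), (c). The key inputs are the identifications ``edge with $\dot\Delta$ = $K_0$-vertex with one leg inserted inside an edge'' and ``$\dot x_0$-contraction on a $W$-vertex = free leg of a $W$-vertex glued by $\Delta$ to a $K_0$-vertex'', which together convert all three sources into graphs with exactly one $K$-insertion. This is routine but notationally heavy; the guiding principle is that the entire calculation is the $dt$-component of the (trivially satisfied) quantum master homotopy for the induced $\tilde W^{eff}$ on the zero-dimensional BV space of $x_0$.
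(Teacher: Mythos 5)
Your proposal follows essentially the same route as the paper: differentiate $W^{eff}$, identify the three sources of $t$-dependence (the explicit derivative of the vertex tensors, $\dot x_0$ via (\ref{critical-derivative}), and $\dot\Delta$ via (\ref{delta-derivative})), convert the first into a graph sum with a single $K$-insertion using the explicit form (\ref{QME-esplicit}) of the master homotopy, and cancel the resulting unstable contributions against the other two sources and against the stable graphs. Your packaging of the residual sum as $\{K^{eff},W^{eff}\}+g_s\Delta K^{eff}$ on the zero-dimensional residual space is a cosmetic reformulation of the paper's case-by-case cancellation of its four types of unstable graphs, with the same combinatorial bookkeeping left implicit in both treatments.
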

\begin{proof}

We need to compute the total derivative $\frac{d}{dt}\tilde{W}_{i}^{eff}(x_0)$. There are three contributions :
\begin{itemize}
    \item the partial derivative of the tensors $\partial^{H_v}W_{\chi_v}$ used in the definition of $Cont_G$;
    \item the derivative of of $\Delta$ given by (\ref{delta-derivative});
    \item the derivative of $x_0$ given by (\ref{critical-derivative}).
\end{itemize}

In order to write explicitly the first contribution we use the quantum master equation. 
 Writing
$$ \tilde{W}(x,y) = W(x) + K(x,y)dt ,$$
 we can rewrite  the QME explicitly as 
\begin{equation} \label{QME-esplicit}
\frac{\partial W_n }{\partial t} + \sum_{m+l=n} \frac{\partial W_m }{\partial x} \frac{\partial K_l }{\partial y} + \frac{\partial^2 K_{n-1} }{\partial x \partial y} =0.
\end{equation}
%From this and (\ref{QME-esplicit}) we have

Using (\ref{QME-esplicit}) we can write the contribution in terms of decorated graphs of the type we have used to define $Cont_G$ above, where the graph $G$ has a special vertex and a special edge. The special edge has one special half edge which  is attached to the special vertex.  On the special vertex we put the tensor  $ \partial^{H_v}K_{1-\chi_v}$, where $\partial^h= \partial_x$ if $h \in H_v$ is not special, and $\partial^h= \partial_y$ if $h \in H_v$ is special. To the special edge we put the identity tensor. 

The graphs appearing in the expansion are not necessarily stable. 
We have four types of unstable graphs:
\begin{enumerate}
\item the special vertex has $i=0$, and has no other half-edges attached besides the special one;
\item the special vertex has $i=0$, and it has exactly one half-edge attached besides the special one;
\item the special edge connects the special vertex to a vertex with $i=0$, with no other half-edges  attached;
\item the special edge connects the special vertex to a vertex with $i=0$, which has exactly one  more half-edge attached;
%\item all the vertices are stable.
\end{enumerate}
The contribution of $(1)$ cancel with  (\ref{critical-derivative}). 
The contribution of $(2)$ cancel with  (\ref{delta-derivative}).
The contribution of $(3)$ vanishes using the definition of $x_0$.
The contribution of $(4)$ cancel with the contribution of the stable graphs: if we eliminate  the unstable vertex and connect the two edges attached to it we obtain a stable graph with opposite contribution.

\end{proof}

\section{Bulk Deformations}

We now include bulk deformations in our constructions.  They can be seen as a deformation of the four-chain $K$.  
%Formally, for $A \in C_4(X, \R)$ closed, the bulk deformation by $A$ is obtained  by replacing
% in the construction of \cite{OGW3}
%\begin{equation} \label{shift-four-chain}
% K    \leadsto  K + \frac{1}{g_s}A .
% \end{equation}

 \subsection{Review of the construction of the $MC$-cycle}

%For each $\beta \in H_2(X,L)$, in \cite{OGW3} it is constructed a  $MC$-cycles $  Z_{\beta} $  using  the moduli spaces of multi-curves 
% $\overline{\mathcal{M}}_{G, m } $ .   
%In order for the $MC$-cycle relation to holds it is necessary to impose constrains  on the perturbations of the moduli space of multi-curves for different  classes $\beta$.  This constrains would be in contrast with transversality. 

% with the one in the class $\beta_1$ and $ \beta_2$. 
%We also stress that with the notion of $MC$-chains used in \cite{OGW3} the equality cannot hold strictly, since for example $\beta_1$ can be equal to $\beta_2$ and the equality will contradict the transversality of $Z_{\beta}$.

Denote by $  \overline{\mathcal{M}}_{(g,V, D, (H)_v)}(\beta) $ 
the moduli  of curves of genus $g$ , homology class $\beta$, whose boundary components are labelled by $V$, internal marked points are  labelled by $D$ and boundary  marked points are labelled by $H_v$ for each $v$.

In \cite{OGW3}, to a decorated graph $(G,m) \in  \mathfrak{G}_l$, it is associated the moduli space of multi-curves
\begin{equation}    \label{multi-curve-interp0}
 \overline{\mathcal{M}}_{G,m} :=    \left( \prod_{c \in Comp(G)} \overline{\mathcal{M}}_{g_c,D_c, V_c, (H_v)_{v \in V_c} }(\beta_c)  \right)  / \text{Aut}(G,m) \times \Delta^l
 \end{equation}   
where $\Delta^l$ is the standard simplex  of dimension $l$.

The space $ \overline{\mathcal{M}}_{G,m} $ is endowed with a Kuranishi structure using an  argument analogous to the one used for the construction of the Kuranishi structures of stable pseudo-holomorphic curves. 

Recall that the standard procedure to construct a Kuranishi structure on moduli space of stable pseudo-homorphic curves uses an argument by induction on the strata. For each strata, we first use a  gluing argument to extend the Kuranishi structure to a neighbourhood of the substratas. After that the Kuranishi structure is extended inside the strata.  This method does not lead to a unique Kuranishi structure. However it has the fundamental property that two Kuranishi structures constructed in this way  are  isotopic, i.e., there exists a Kuranishi structure on the moduli spaces times $[0,1]$ whose restrictions to $0$ and $1$  agree with the starting Kuranishi structures.
 This isotopy is constructed using an analogous inductive argument.

Using an inductive argument similar to the one used in  the construction of the Kuranishi structures for stable pseudo-holomorphic curves, we can obtain a collection of Kuranishi structures $(\overline{\mathcal{M}}_{G, m })_{(G,m) \in \mathfrak{G}(\beta, \leq \kappa   )}$  such that:
\begin{itemize}
\item  forgetful compatibily holds;
% with respect each puncture associate to all the external edges $E^{ex}(G)$;
\item the evaluation map 
$$\text{ev} : \overline{\mathcal{M}}_{G,m} \rightarrow L^{H(G) \setminus H_l} \times X^{D(G)}$$ 
is weakly submersive;
\item   the corner faces of $\overline{\mathcal{M}}_{(G,m)}$ are in bijection with  the graphs $\mathfrak{G}(G,m)$
$$  \mathfrak{G}(G,m ) \leftrightarrow  \{    \text{   corner faces of  } \overline{\mathcal{M}}_{(G,m)}    \} .$$
The corner face $ \overline{\mathcal{M}}_{G,m }( G',m',E') $ corresponding to  $(G',m',E') \in \mathfrak{G}(G,m)$ comes with an identification of Kuranishi spaces: 
\begin{equation} \label{corner-faces-multi-interp0}
 \overline{\mathcal{M}}_{G, m }(G' ,m', E') \cong   \delta_{E'}  \overline{\mathcal{M}}_{G',m'}.
  % \overline{\mathcal{M}}_{G, m }(G' ,E', F') \cong   \left( \prod_{c \in C(G)} \overline{\mathcal{M}}_{(g_c,h_c), H_c, m_c}(\beta_c) \times_{L^{H(G)} \times X^{n_G} \times \partial_F \Delta^{C(G)}} ( L^{E(G)} \times L^{n_G} \times \partial_F \Delta ) \right) / \text{Aut}(G)
\end{equation}
%where  $m'$ is the labeling of $E(G')$ corresponding to $\partial_{F'} m$.
\end{itemize}
The Kuranishi spaces $\delta_{E'}  \overline{\mathcal{M}}_{G',m'}$ are defined as fiber products. See \cite{OGW3}.
%Note that in point $(3)$ it is used point $(2)$ to define the Kuranishi structure on $\delta_{E'}  \overline{\mathcal{M}}_{G',m'}$ as fiber product. 
%in (\ref{bedge}).

%For each pseudo-holomorphic curve $(\Sigma,u)$ the obstruction bundle is a sub-vector space
%$$  \prod_{c \in Comp(G)} C^{\infty} (\Sigma, u^*(TM) \otimes \Lambda^{0,1}(\Sigma))$$

%For $v \in V^{deg}(G)$ define 
%$$\delta_v \overline{\mathcal{M}}_{G,m} = \text{ev}_v^{-1} ( L) .$$

Consider collections of perturbations
$( \mathfrak{s}_{(G,m)}^+ )_{(G,m) \in \mathfrak{G}(\beta, \leq \kappa   ) }$ 
of the collection of Kuranishi spaces
 $(\overline{\mathcal{M}}_{G,m} )_{G \in \mathfrak{G}(\beta, \leq \kappa   ) }$ such that 
%For each $(G,m) \in \mathfrak{G}(\beta)$ with $\chi(G) \geq \chi$, there exist a pertubation $\mathfrak{s}_{(G,m)}$ of the Kuranishi map $s$ such that 
%collection of  pertubations  $\{ \mathfrak{s}_{(G,m)} \}_{(G,m) \in  \cup_{\chi' \geq \chi}  \mathfrak{G}(\beta, \chi'), }$ such that 
\begin{itemize}
\item they are transversal to the zero section and small in order for constructions of virtual class to work;
\item they are compatible with the identification of Kuranishi spaces (\ref{corner-faces-multi-interp0});
\item 
%are forgetful compatible with respect to the external edges and $E_0$.
  forgetful compatibility holds;
 \item  they are transversal  when restricted to $\delta_{E}  \overline{\mathcal{M}}_{G,m}$ for each $E \subset (E^{in}(G) \setminus E_l) \sqcup D(G)$ 
 \end{itemize}

Using these perturbations we define the collections of chains $( Z_{(G,m)}^+ )_{(G,m) \in \mathfrak{G}_*(\beta, \leq \kappa   ) }$
\begin{equation} \label{multi-curve-cycle-not-ab}
 Z_{(G,m)}^{not-ab, +} =  (\text{ev}_{G,{m}})_* (\mathfrak{s}_{G,{m}}^{-1}(0)) \in C_*(L^{H(G)} \times X^{D(G) })^{Aut(G,m)}  .
\end{equation}

The not abelian $MC$-cycle $Z^{not-ab}$ is obtained taking the fiber product
\begin{equation} \label{intersect-K-gen}
Z_{G,m}^{not-ab}=  Z_{G,m}^{not-ab,+} \times_{X^{D(G^+) }} (K^{D(G^+)}  ).
\end{equation}

From $Z^{not-ab}$ we can construct the abelian $MC$-cycle and the nice $MC$-cycle as in \cite{OGW3}.

%In $\cite{OGW3}$ the moduli spaces $\overline{\mathcal{M}}_{G, m }$ are endowed with a of Kuranishi structure constructed by induction on the substrata. On each strata it is used a gluing argument to extend the Kuranishi structure to a neighborhood of the strata. This method does not lead to a unique Kuranishi structure. However it has the fundamental property that two Kuranishi structures constructed in this way  are isotopic. This means that  there exists a Kuranishi structure on the moduli spaces $ \overline{\mathcal{M}}_{G, m } \times [0,1]$ that agree  at $0$ and $1$ with the two Kuranishi structures. The Kuranishi structure on $ \overline{\mathcal{M}}_{G, m } \times [0,1]$ is constructed using the same method.

\subsection{Bulk deformation of moduli spaces}

We now define the moduli space of multi-curves with internal punctures. The construction above can be straightforwardly adapted to this context. 

%In order to contract a $MC$-cycle 
% we need to 
Define  the set of decorated graphs 
  with internal punctures  $\mathfrak{G}^{+}$: 
 an element    $G^+ \in \mathfrak{G}^+$ consists of an array 
$$(Comp, (V_c, P_c , D_c, \beta_c ,g_c )_c, (H_v)_v, E)$$ 
where 
%  the same data of elements of $\mathfrak{G}$ plus the following item:
\begin{itemize}
\item for each $c \in Comp(G^+)$,  $P_c$ is a finite set, called internal punctures.
\end{itemize}
All the other data are the same as in the definition of elements of $\mathfrak{G}$.
Set $P(G^+) = \sqcup_c P_c$.

Analogously to $\overline{\mathcal{M}}_{G,m}$, we can associate to $(G^+.m)$ the moduli space 
%$\overline{\mathcal{M}}_{G^+,m}$ 
\begin{equation}    \label{multi-curve-interp0-bulk}
 \overline{\mathcal{M}}_{G^+,m} :=    \left( \prod_{c \in C(G)} \overline{\mathcal{M}}_{g_c,P_c, D_c, V_c, (H_v)_v }(\beta_c)  \right)  / \text{Aut}(G^+,m) \times \Delta_l
 \end{equation} 
where, for each component $c$, we have added to  the moduli space of pseudo-holomoprhic curves $\overline{\mathcal{M}}_{g_c,P_c ,D_c , V_c, H_c}(\beta_c)$ internal marked points labelled by $P_c$. 
%As in \cite{OGW3} 
We can equip  $(\overline{\mathcal{M}}_{G^+,m} )_{G^+ \in \mathfrak{G}^+(\beta, \leq \kappa   ) }$ with a Kuranishi structure with the following properties:  
\begin{enumerate}
\item  forgetful compatibility holds;
\item the evaluation map $\text{ev} : \overline{\mathcal{M}}_{G^+,m} \rightarrow L^{H(G^+) \setminus H_l} \times X^{D(G^+) \sqcup P(G^+)}$ is weakly submersive;
\item   the corner faces of $\overline{\mathcal{M}}_{(G^+,m)}$ are in bijection with  the graphs $\mathfrak{G}^+(G^+,m)$.
%$$  \mathfrak{G}(G,m ) \cong  \{    \text{   corner faces of  } \overline{\mathcal{M}}_{(G,m)}    \} .$$
The corner face $ \overline{\mathcal{M}}_{G^+,m }( {G^+}',m',E') $ corresponding to  $({G^+}',m',E') \in \mathfrak{G}^+(G^+,m)$ comes with an identification of Kuranishi spaces: 
\begin{equation} \label{corner-faces-multi-bulk}
 \overline{\mathcal{M}}_{G^+, m }({G^+} ' ,m', E') \cong   \delta_{E'}  \overline{\mathcal{M}}_{{G^+}',m'}.
  % \overline{\mathcal{M}}_{G, m }(G' ,E', F') \cong   \left( \prod_{c \in C(G)} \overline{\mathcal{M}}_{(g_c,h_c), H_c, m_c}(\beta_c) \times_{L^{H(G)} \times X^{n_G} \times \partial_F \Delta^{C(G)}} ( L^{E(G)} \times L^{n_G} \times \partial_F \Delta ) \right) / \text{Aut}(G)
\end{equation}
%where  $m'$ is the labeling of $E(G')$ corresponding to $\partial_{F'} m$.
\end{enumerate}

%Given a four chain $K$ we can define the moduli space  $ \overline{\mathcal{M}}_{G^+,m}^K$ analogously to what we have done for  $ \overline{\mathcal{M}}_{+,m}^K$ 

%Using the same inductive  argument of ..., we can construct a perturbation 

Consider collections of perturbations
$( \mathfrak{s}_{(G^+,m)}^+ )_{G^+ \in \mathfrak{G}^+(\beta, \leq \kappa   ) }$ 
of the collection of Kuranishi spaces
 $(\overline{\mathcal{M}}_{G^+,m} )_{G^+ \in \mathfrak{G}^+(\beta, \leq \kappa   ) }$ such that 
%For each $(G,m) \in \mathfrak{G}(\beta)$ with $\chi(G) \geq \chi$, there exist a pertubation $\mathfrak{s}_{(G,m)}$ of the Kuranishi map $s$ such that 
%collection of  pertubations  $\{ \mathfrak{s}_{(G,m)} \}_{(G,m) \in  \cup_{\chi' \geq \chi}  \mathfrak{G}(\beta, \chi'), }$ such that 
\begin{itemize}
\item they are transversal to the zero section and small in order for constructions of virtual class to work;
\item  they are compatible with the identification of Kuranishi spaces (\ref{corner-faces-multi-bulk});
\item 
%are forgetful compatible with respect to the external edges and $E_0$.
  forgetful compatibility holds;
 \item  they are transversal  when restricted to $\delta_{E}  \overline{\mathcal{M}}_{G^+,m}$ for each $E \subset (E^{in}(G^+) \setminus E_l) \sqcup (D(G^+) \sqcup P(G^+) )$ 
 \end{itemize}

Using these perturbations we define the collections of chains $( Z_{(G^+,m)}^+ )_{(G^+,m) \in \mathfrak{G}_*^+(\beta, \leq \kappa   ) }$
\begin{equation} \label{multi-curve-cycle-not-ab-bulk}
 Z_{(G^+,m)}^{not-ab,+} =  (\text{ev}_{G^+,{m}})_* (\mathfrak{s}_{G^+,{m}}^{-1}(0)) \in C_*(L^{H(G^+)} \times X^{D(G^+) \sqcup P(G^+)})^{Aut(G^+,m)}  .
\end{equation}

%The associated virtual fundamental classes define collection of chains $( Z_{(G^+,m)}^+ )_{(G^+,m) \in \mathfrak{G}_*^+(\beta, \leq \kappa   ) }$ with  $Z_{G^+,m}^+ \in C_*(L^{H(G^+)} \times X^{D(G^+) \sqcup P(G^+)})^{Aut(G,m)}$. 

%--The operator  $\hat{\partial} =  \partial + \delta + \eth $ is extended straightforwardly to these objects. $\delta$  needs to include also the intersections associate to $e \in D(G)$:
% $\delta C_{G,m}^+= \sum_{(G',m')/e'=(G,m)} \delta_{e'} C_{G',m'}^+$, where the sum run on $e' \in (E^{in}(G) \setminus E_l) \sqcup D(G)$. We have 
%$$  \hat{\partial} Z_{G,m}^+ = 0 .$$

The not abelian $MC$-cycle $Z^{not-ab}$ is obtained taking the fiber product
\begin{equation} \label{intersect-K-gen-bulk}
Z_{G^+,m}^{not-ab}=  Z_{G^+,m}^{not-ab,+} \times_{X^{D(G^+) \sqcup P(G^+)}} (K^{D(G^+)} \times A^{P(G^+)} ).
\end{equation}

From $Z^{not-ab}$ we can construct the  abelian $MC$-cycle and the nice $MC$-cycle as in \cite{OGW3}.

%Using these perturbations we define the $MC$-cycle
%\begin{equation} \label{multi-curve-cycle-not-ab}
% Z_{(G^+,m)}^{+, not-ab} =  (\text{ev}_{G^+,{m}})_* (\mathfrak{s}_{G^+,{m}}^{-1}(0)) \cap (A^{P(G^+)} \times K^{D(G^+)}).
%\end{equation}

\subsection{Nice $MC$-cycles}

We now describe the nice $MC$-cycles with bulk deformations.

A generator with bulk deformations is defined by an array
\begin{equation} \label{generator2-bulk}
(V, H,  ( w_{v,h} )_{v \in V,h \in H} ,  ( w^{ann0}_h )_{h \in H},  ( w_h^{bulk0}  )_{h \in H}), (  \beta_v )_{v \in V}  )
\end{equation}
where $V$ and $H$ are finite sets, $\beta_v \in \Gamma$,   $ \{ w_{v,h} \}_{h \in H}$ are closed one dimensional chain on $M$ on the homology class $\partial \beta_v$ , which are close in the $C^0$-topology.  
 $ \{ w^{ann}_h \}_{h \in H}$  are closed one dimensional chains on $M$  which are close to $w^{ann}$ in the $C^0$-topology.  
$( w^{bulk0}_h )_{h \in H}$   are closed one dimensional chains on $M$  which are close to $A \cap L$ in the $C^0$-topology.  

We assume the support property  (\ref{support-property-nice-cycles}).

%We assume that 
% $$   \parallel \beta_v \parallel \leq C^{supp}  \omega(\beta_v) \text{   for each  } v \in V  .$$

Denote by $\mathfrak{Gen}(\beta,A)^{\dagger}$ the set of objects (\ref{generator2-bulk}) such that $\sum \beta_v = \beta$, modulo the obvious equivalence relation.

%--Observe that (\ref{support-property-nice-cycles}) implies that there exists  $N_{\beta } \in \Z_{> 0}$ such that
%\begin{equation}
%\label{bound-vertices}
%    |V| \leq N_{\beta } 
%\end{equation}
%for each generator (\ref{generator2-bulk}).

Set $w_h= \sum w_{v,h} + w^{ann0}_h+ w_h^{bulk0}$.
Define $\mathfrak{Gen}(\beta,A) $ as the subset of $ \mathfrak{Gen}(\beta,A)^{\dagger}$ obtained imposing the extra condition  
\begin{equation} \label{generator-transversality2-bulk}
w_{h} \cap w_{h'} = \emptyset \text{    if    } h \neq h'.
\end{equation}
%The vector space $\mathcal{Z}_{\beta}^{\diamond}$ of $\mathbf{nice \thickspace MC-cycles}$ in the homology class $\beta$  is the formal vector space generated by $\mathfrak{Gen}(\beta)$.

Let $\mathcal{Z}^{A}_{\beta} $ be the set of formal power series
$$ \sum_i g_s^{k_i} \mathfrak{b}^{l_i}  Gen_i$$
with $Gen_i \in \mathfrak{Gen}(\beta,A) $, $k_i , l_i \rightarrow \infty$, and $k_i + l_i+  |V_i| \geq 0$. 

\subsubsection{Isotopies}

To define  isotopies of nice-$MC$ cycles  with bulk deformations  we consider objects 
\begin{equation} \label{generator2-iso-bulk}
(V, H,  ( \tilde{w}_{v,h} )_{v \in V,h \in H} ,  ( \tilde{w}_{h}^{ann0} )_{h \in H}   ,  ( \tilde{w}_{h}^{bulk0} )_{h \in H}, (  \beta_v )_{v \in V} , [a,b]   )
\end{equation}
where $[a,b] \subset [0,1]$,  $ \tilde{w}_{h,v} : F_v \times [a,b] \rightarrow M$ for some one dimensional compact manifold $F_v$. Set
$ \tilde{w}_{h}^{\flat} = \sum_v \tilde{w}_{h,v} $.

Let $\tilde{\mathfrak{Gen}}(\beta, A)$ be set of objects (\ref{generator2-iso-bulk}) modulo the obvious equivalence relation.

An isotopy of of nice-$MC$ cycles $\tilde{Z} $ is defined by formal power series 
$$\tilde{Z}= \sum_i r_i g_s^{k_i} \mathfrak{b}^{l_i} (V_i, H_i,  ( \tilde{w}_{v,h,i} )_{v \in V_i,h \in H_i} ,  ( \tilde{w}_{h,i}^{ann0} )_{h \in H_i}   , ( \tilde{w}_{h,i}^{bulk0} )_{h \in H_i},   (  \beta_v )_{v \in V_i} , [a_i,b_i]   ) ,$$
with $r_i \in \Q$, $k_i, l_i \rightarrow \infty$, and $k_i + l_i+ |V_i| \geq 0$. 

The formal power series are considered modulo gluing of objects (\ref{generator2-iso-bulk}).

$\tilde{Z}$ defines a one parameter family of nice $MC$-cycles $(Z^t)_t$ with  $Z^t \in \mathcal{Z}^{A}_{\beta}$ discontinuous for a finite number of times. The discontinuity at the time $t_0$ is obtained by the formula:
\begin{equation} \label{jump2-bulk}
 Z^{t_0^+} - Z^{t_0^-} =  \pm r' g_s^{k'}   \mathfrak{b}^{l'}  (V', H',  ( w_{v,h}' )_{v \in V',h \in H'} ,({w_{h}^{ann0}}')_{h \in H'}, ({w_{h}^{bulk0}}')_{h \in H'},( \beta_v' )_{v \in V'}  )
 %(\frac{\mathfrak{b}}{g_s})^{l''}
  % \{ \tilde{w}_h(t_0) \}_{h \in H \setminus {h_1, h_2}}   .
\end{equation}
As in the case without bulk deformations we have $r' = r_i$, $H'= H \setminus \{ h_1,h_2\} $ and the sign is determined by the sign of the crossing.
In the case 
$\tilde{w}_{ h_1,i}$ crosses $\tilde{w}_{ h_2,i}$   or 
$\tilde{w}_{h_1,i}$ crosses $\tilde{w}^{ann}_{ h_2,i}$ we have $l'=l_i,k' = k_i+1$ and the data on the RHS are the same without bulk deformations.
Here we have two extra cases to consider:
\begin{itemize}
     \item  If $\tilde{w}_{h_1,i}$ or $\tilde{w}^{ann}_{ h_1,i}$ crosses $\tilde{w}^{bulk}_{ h_2,i}$, 
     we have $V'=V_i , k'= k_i, l' = l_i+1 $;
    \item If  $\tilde{w}^{bulk}_{ h_1,i}$ crosses $\tilde{w}^{bulk}_{ h_2,i}$, 
    we have $V'=V_i, k'= k_i-1, l' = l_i+2 $.
\end{itemize}
Denote by  $\tilde{\mathcal{Z}}_{\beta}^{A}$ the set of isotopies of nice $MC$-cycles in the class $\beta$.

%where
%\begin{itemize}
%    \item If  $\tilde{w}_{v_1, h_1}^i$ crosses $\tilde{w}_{v_2, h_2}^i$  transversely we have two cases:
%    \begin{itemize}
%        \item $v_1 \neq v_2$: $H'= H_i \setminus \{ h_1,h_2 \} $  , $V' = V_i \setminus \{v_1, v_2 \}  \sqcup v_0$, where $v_0$ is a new vertex with associated data $\beta_{v_0}'= \beta_{v_1} + \beta_{v_2}$,  $ w_{ v_0,h, t_0}' = w_{ v_1,h,t_0^- }^i + w_{ v_2,h,t_0^- }^i $ for each $h \neq h_1, h_2$. All the other data remain  the same.
%\item  $v_1=v_2$:  $H'= H^i \setminus \{ h_1,h_2\} $, $V'=V_i$ .  All the other data remain the same.
%    \end{itemize}
%    \item If $\tilde{w}_{v, h_1}^i$ crosses $\tilde{w}^{ann}_{ h_2.i}$, $H'= H_i \setminus \{ h_1,h_2\} $, $V'=V^i$  .  All the other data remain the same.
%\end{itemize}
%$r' = r_i$, $k' = k_i+1$ and the sign is defined by the sign of the crossing. 
%---

\subsection{Partition Function}

Let 
$$G =(V(G), ( H_v )_{v \in V(G)}, E(G), (\beta_v )_{v \in V(G)} , V^{ann0}, V^{bulk0})$$ 
be a decorated graph, where  $V^{ann0},  V^{bulk0} \subset V(G)$ are special sets of vertices. 
 For each $v \in V^{ann0} \sqcup V^{bulk0}$ we assume $\beta_v=0$ and $|H_v|=1$. We assume that there are no other area zero vertices besides $ V^{ann0 }  \sqcup V^{bulk0} $ and that there exists an identification $V(G) \setminus (V^{ann0}  \sqcup V^{bulk0}   )= V$ compatible with the data $(\beta_v)_v$.
Set 
$w_{h,v_h} = w_h^{ann0} $ if $v_h \in V^{ann0}$,
$w_{h,v_h} = w_h^{bulk0} $ if $v_h \in V^{bulk0}$.

The contribution to the partition function associated to $G$ is described  by the formula
\begin{multline} \label{contribution-graphs-bulk}
    \mathfrak{P}( G, (V, H,  ( w_{v,h} )_{v \in V,h \in H} ,  ( w^{ann0}_h )_{h \in H}, ( w^{bulk0}_h )_{h \in H}, (  \beta_v )_{v \in V}  )) =  \\
    \sum_{s}   \frac{g_s^{|E^{in}(G)| - | V^{bulk0}| } \mathfrak{b}^{| V^{bulk0}|}  }{|\text{Aut}(G)|} \prod_{\{ h,h' \} \in E^{in}} \langle P,w_{s(h),v_h}  \times w_{s(h'),v_{h'}}  \rangle   \times \prod_{ \{ h \} \in E^{ex}} \langle \sum_i x_i \alpha_i, w_{s(h),v_h} \rangle.
\end{multline}
where the sum is made  on the set of bijective maps $s : \sqcup_v H_v  \rightarrow H$.

The partition function $\mathfrak{P}( Z)$ associated to a nice $MC$-cycle  $Z \in \mathcal{Z}_{\beta}^A$ is obtained as formal power series adding all the contributions (\ref{contribution-graphs-bulk}).  $\mathfrak{P}( Z)$ admits an expansion of formal power series 
$$ \mathfrak{P}( Z) = \sum r_i g_s^{ k_i } \mathfrak{b}^{l_i} p_i(x)$$
where $k_i , l_i \rightarrow \infty$, $k_i + l_i+ N_{\beta} \geq 0$ and 
$p_i(x) \in \R[[(x_j)_j]] $.

As in the case without balk deformation,
the $MC$-cycle $ (Z_{G^+,m})_{G^+,m}$ which we obtarin from the moduli space of mutlti-curves depends on the varies choices we made. Two different choices lead  to isotopic $MC$-cycles. To an isotopy $ (\tilde{Z}_{G^+,m})_{G^+,m}$ of  $MC$-cycles it is associated an isotopopy of quantum master equation $\mathfrak{P} (\tilde{Z})$ :
$$ \frac{d}{dt} \mathfrak{P} (\tilde{Z}) + g_s \Delta \mathfrak{P} (\tilde{Z})=0 .$$

%---

Denote by $Z^{K,A}_{\beta}$ the nice $MC$-cycle obtained from the moduli space of multi-curves with four chain $K$ and bulk deformation $A$ and 
%Moreover assume that $Z^{K,A}$ satisfies the factorization property.
set 
$$ \mathfrak{P} (\beta,  K, A)= \mathfrak{P} (Z^{K,A}_{\beta}) .$$
%where $Z$ is the nice $MC$-cycle constructed from the moduli space of multi-curves in the class $\beta$, using the four chain $K$ and with bulk deformation $A$. 
$ \mathfrak{P} (\beta, K, A)$ is well defined up to isotopy.

From the definition it is immediate to check that
$$  \mathfrak{P} (\beta, K+ r A, A)  (g_s, \mathfrak{b}) =  \mathfrak{P} (\beta, K, A)  (g_s, \mathfrak{b} + r g_s ).  $$
%The last formula makes the relation (\ref{shift-four-chain}) precise.

\subsubsection{Bulk deformations with more chains}
Fix a finite set of four-chains $A_1,A_2,...,A_l$ which intersect transversely.
Using the same procedure above
we can  consider bulk deformations by $A_1,A_2,...,A_l$. We introduce formal variables  $\mathfrak{b}_1,\mathfrak{b}_2,..., \mathfrak{b}_l$. 
We need to consider decorated graphs with internal punctures $P_1,P_2,...,P_l$. It is straightforward to extend the construction above to define  moduli space of multi-curves and obtain the associated $MC$-cycle  $Z^{K,A_1,...,A_l}_{\beta}$. 

The space of nice $MC$-cycle is generated by objects
\begin{equation} \label{generator2-bulk-more}
(V, H,  ( w_{v,h} )_{v \in V,h \in H} ,  ( w^{ann0}_h )_{h \in H},  ( w_h^{bulk0,j}  )_{h \in H, 1 \leq j \leq l}, (  \beta_v )_{v \in V}  ).
\end{equation}
The definition of the partition function $ \mathfrak{P} (Z)$ is extended straightforwardly.

Set 
$$ \mathfrak{P} (\beta, K, A_1,...,A_l)= \mathfrak{P} (Z^{K,A_1,...,A_l}_{\beta}) .$$
%where $Z$ is the nice $MC$-cycle   constructed from the moduli space of multi-curves in the class $\beta$, using the four chain $K$ and with bulk deformation $A_1,A_2,...,A_l$.

From the definition we have that
%$$  \mathfrak{P} (\beta, K+A_i, A_1,...,A_l)  (g_s, \mathfrak{b}_1, ...,  \mathfrak{b}_l ) =  \mathfrak{P} (\beta, K, A)  (g_s, \mathfrak{b}_1',..., \mathfrak{b}_l' ).  $$
%where $\mathfrak{b}_i'= \mathfrak{b}_i + g_s$ and $\mathfrak{b}_j'= \mathfrak{b}_j$ for $j \neq i$.
$$  \mathfrak{P} (\beta, K+ \sum_i r_i A_i, A_1,...,A_l)  (g_s, \mathfrak{b}_1 , ...,  \mathfrak{b}_l ) =  \mathfrak{P} (\beta, K, A)  (g_s, \mathfrak{b}_1 + r_1 g_s,..., \mathfrak{b}_l + r_l g_s ).  $$

\section{Factorization property} \label{factorization-property-section}

%\subsubsection{conclusion}

In the construction of \cite{OGW3}, we fixes a class $\beta \in H_2(X,L)$ and considered Multi-Curves in the class $\beta$ without imposing any constrain with multi-curves in the classes $\beta'$ with $\omega(\beta') < \omega(\beta) $. If $\beta= \beta_1 + \beta_2$, the product of the moduli space of multi-curves of classes $\beta_1$ and $\beta_2$, is part of the moduli of multi-curves in class $\beta$. This fact should   leads to a relation between the element $Z_{\beta}$ and the product $Z_{\beta_1} \times Z_{\beta_2}$. However in construction of \cite{OGW3} does not give directly a strict equality, but only up to isotopies. We now explain precisely what this means, and how to get the strict equality that we need.

\subsection{Factorization map}

In order to prove the factorization property it is necessary to realize the factorization property on the moduli space of multi-curves. In order to achieve this, our first step is to extend the factorization property  (\ref{nice-factorization-eq}) to the (not nice) $MC$-cycles.

%As in \cite{MCH}, the relation with the moduli space and $MCH$ is provided using the not nice $MCH$.
%Thus we first step need to extend to the not nice $MCH$ the definition of factorization property that we  gave in (\ref{nice-factorization-eq}). 

For $\beta_1, \beta_2 \in \Gamma$, define $\mathfrak{G}_{l}(\beta_1,\beta_2) = \mathfrak{G}_{ l}(\beta_1) \times \mathfrak{G}_{ l}(\beta_2)$, and $\mathfrak{G}(\beta_1,\beta_2)= \sqcup_l \mathfrak{G}_l(\beta_1,\beta_2)$. The operator $\delta_e$ extends straightforwardly to $\mathfrak{G}(\beta_1,\beta_2)$.

%using $\mathfrak{G}(\beta_1,\beta_2)$ instead of $\mathfrak{G}(\beta_1,\beta_2)$, 
Using this we define a version  of $MCH$, where the space of $MC$-chains  $\mathcal{C}_{\beta_1,\beta_2}$ are given by collections of chains $\{ C_G \}_{G \in \mathfrak{G}(\beta_1,\beta_2)}$ .  We shall denote the $MC$-cycles as $\mathcal{Z}_{\beta_1,\beta_2}$.

%$$\mathfrak{G}_{k, l}(\beta_1,\beta_2) = \sqcup_{k_1+k_2=k} \mathfrak{G}_{k_1, l}(\beta_1) \times \mathfrak{G}_{k_2, l}(\beta_2).$$
%Using  $\mathfrak{G}(\beta_1,\beta_2)$ instead of $\mathfrak{G}(\beta)$ we can define  $\mathfrak{G}(\beta_1,\beta_2)$in a way completely parallel to the definition of $MCH(\beta)$.

We need to consider two main operations:
\begin{itemize}
\item The factorization map :
%(extending (\ref{nice-fact-map})):
\begin{equation} \label{restriction}
 \mathbf{fact}_{\beta_1,\beta_2} :  \mathcal{C}_{\beta} \rightarrow  \mathcal{C}_{(\beta_1,\beta_2)}.
\end{equation}
given by 
\begin{equation} \label{fact-map}
 \mathbf{fact}_{\beta_1,\beta_2}(C)_{(G_1,\{ E_{i,1}  \}_{0 \leq i \leq l}) , (G_2, \{ E_{i,2}  \}_{0 \leq i \leq l})} := C_{G_1 \sqcup G_2,\{ E_{i,1} \sqcup E_{i,2} \}_{0 \leq i \leq l}}.
\end{equation}

%The factorization map (\ref{nice-fact-map}) is the map 
%\begin{equation} \label{restriction}
% \mathbf{fact}_{\beta_1,\beta_2} :  MCH-cycles (\beta) \rightarrow  MCH-cycles(\beta_1,\beta_2).
%\end{equation}
%given by 
%\begin{equation} \label{fact-map}
% \mathbf{fact}_{\beta_1,\beta_2}(Z)  ((G_1,\{ E_{i,1}  \}_{0 \leq i \leq l}) , (G_2, \{ E_{i,2}  \}_{0 \leq i \leq l})) = Z(G_1 \sqcup G_2,\{ E_{i,1} \sqcup E_{i,2} \}_{0 \leq i \leq l})
%\end{equation}

\item The  product of $MC$-chains:
\begin{equation} \label{cup-formula}
%(C_1\boxtimes C_2)_{(G_1,\{ E_{i,1}  \}_{0 \leq i \leq l}) , (G_2, \{ E_{i,2}  \}_{0 \leq i \leq l})) := \sum_{0 \leq r \leq l}  C_1(G_1,  \{ E_{i,1}  \}_{0 \leq i \leq r}) \times  C_2(G_2, \{ E_{i,2}  \}_{r \leq i \leq l})   
(C^1\boxtimes C^2)_{(G^1, m^1) , (G^2, m^2) } := \sum_{0 \leq r \leq l}  C^1_{G^1, m^1_{[0,r]}} \times  C^2_{G^2, m^2_{[r,l]}}. 
\end{equation}
A direct computation shows that $\cup$ is compatible with $\hat{\partial} $:
$$  \hat{\partial}  (C^1 \boxtimes C^2) =  \hat{\partial} (C^1) \boxtimes C^2  + C^1 \boxtimes  \hat{\partial} (C^2)  .$$

\end{itemize}

%The extension of the factorization property (\ref{factorization-eq}) to $MCH$ states that 
%$$  \mathbf{fact}_{\beta_1,\beta_2} (Z_{\beta}) = Z_{\beta_1} \cup Z_{\beta_2}  $$
%$  \mathbf{fact}_{\beta_1,\beta_2} (Z_{\beta})$ and $ Z_{\beta_1} \cup Z_{\beta_2} $ are isotopic. 

%We would like to define a nice $MC$ In principle we would like that the identity $ \mathbf{fact}_{\beta_1,\beta_2} (Z_{\beta}) = Z_{\beta_1} \cup Z_{\beta_2}  $ should hold. However this identity cannot hold in the strict sense, since it is in contradiction with transversality (consider for example the case $\beta_1= \beta_2$).   

\subsection{Moduli spaces} \label{factorization-moduli-section}

We now define moduli space of multi-curves associated to decorated graphs $ \mathfrak{G}(\beta_1 ,\beta_2)$ instead of  $ \mathfrak{G}(\beta)$.
%Analogously to what it is done for graphs in $ \mathfrak{G}(\beta)$ ,

To an element $(G_1,m_1),(G_2,m_2) \in  \mathfrak{G}(\beta_1 ,\beta_2) $ we  associate the moduli space 
\begin{equation}    \label{multi-curve-interp0-fact}
 \overline{\mathcal{M}}_{(G_1,m_1),(G_2,m_2)} :=    \left( \prod_{c \in C(G)} \overline{\mathcal{M}}_{g_c,D_c, V_c ,(H_v)_{v \in V_c} }(\beta_c)  \right)  / \text{Aut}((G_1,m_1),(G_2,m_2)) \times \Delta^l  .
 \end{equation} 
Observe that $ \overline{\mathcal{M}}_{(G_1,m_1),(G_2,m_2)} $ differs from 
$\overline{\mathcal{M}}_{ (G_1 \sqcup G_2 , m_1 \sqcup m_2  )}$
%$ \overline{\mathcal{M}}_{G, m } $ for $G= G_1 \sqcup G_2, m= m_1 \sqcup m_2$
by the  automorphism group:  
\begin{equation} \label{space-fact}
% \overline{\mathcal{M}}_{ (G_1,\{ E_{i,1}  \}) , (G_2, \{ E_{i,2}  \})}/  (\text{Aut}(G_1 \sqcup G_2 ,\{ E_{i,1} \sqcup E_{i,2}  \}) / \text{Aut}((G_1,\{ E_{i,1}  \}) , (G_2, \{ E_{i,2}  \})) ) \cong  \overline{\mathcal{M}}_{ (G_1 \sqcup G_2 ,\{ E_{i,1} \sqcup E_{i,2}  \})} .
\overline{\mathcal{M}}_{ (G_1,m_1) , (G_2, m_2)}/  (\text{Aut}(G_1 \sqcup G_2 , m_1 \sqcup m_2  ) / \text{Aut}((G_1,m_1) , (G_2, m_2)) ) \cong  \overline{\mathcal{M}}_{ (G_1 \sqcup G_2 , m_1 \sqcup m_2  )} .
\end{equation}
Using an inductive argument on the strata analogous to the one we have used for as the moduli spaces $\overline{\mathcal{M}}_{G, m }$, the moduli $ \overline{\mathcal{M}}_{(G_1,m_1),(G_2,m_2)} $  can be equipped  with a Kuranishi structure such that
\begin{enumerate}
\item $ \overline{\mathcal{M}}_{(G_1,m_1),(G_2,m_2)} $ is forgetful compatible;
% with respect each puncture associate to all the external edges $E^{ex}(G)$;
\item the evaluation map 
$$\text{ev} :  \overline{\mathcal{M}}_{(G_1,m_1),(G_2,m_2)} \rightarrow L^{H(G_1 \sqcup G_2) \setminus (H_{1,l} \sqcup H_{2,l})} \times X^{D(G_1)\sqcup D(G_2)}$$ 
is weakly submersive;
\item   the corner faces of  $ \overline{\mathcal{M}}_{(G_1,m_1),(G_2,m_2)} $ are in bijection with  the graphs $\mathfrak{G}((G_1,m_1),(G_2,m_2))$.
%$$ \mathfrak{G}((G_1,m_1),(G_2,m_2)) \leftrightarrow  \{    \text{   corner faces of  } \overline{\mathcal{M}}_{(G_1,m_1),(G_2,m_2)}     \} .$$
%The corner face $ \overline{\mathcal{M}}_{G,m }( G',m',E') $ corresponding to  $(G',m',E') \in \mathfrak{G}(G,m)$ comes with an identification of Kuranishi spaces: 
%\begin{equation} \label{corner-faces-multi-interp0}
 %\overline{\mathcal{M}}_{G, m }(G' ,m', E') \cong   \delta_{E'}  \overline{\mathcal{M}}_{G',m'}.
 % % \overline{\mathcal{M}}_{G, m }(G' ,E', F') \cong   \left( \prod_{c \in C(G)} \overline{\mathcal{M}}_{(g_c,h_c), H_c, m_c}(\beta_c) \times_{L^{H(G)} \times X^{n_G} \times \partial_F \Delta^{C(G)}} ( L^{E(G)} \times L^{n_G} \times \partial_F \Delta ) \right) / \text{Aut}(G)
%\end{equation}
%where  $m'$ is the labeling of $E(G')$ corresponding to $\partial_{F'} m$.
\end{enumerate}
Here $\mathfrak{G}(G_1,m_1,G_2,m_2)$ is defined as the set of $ (G_1',E_1', m_1'),(G_2',E_2',m_2')$ such that $G_1'/E_1'=G_1$, $G_2'/E_2'=G_2$ with $m_1,m_2$ identified with $m_1',m_2'$ throughout the isomorphisms.

Two Kuranishi structure constructed in this way will be isotopic.

Given a four chain $K$, analogously to the Kuranishi spaces  $\overline{\mathcal{M}}_{G, m }^K$ introduced in \cite{OGW3},  we  set
$$ \overline{\mathcal{M}}_{(G_1,m_1),(G_2,m_2)}^K =  \overline{\mathcal{M}}_{(G_1,m_1),(G_2,m_2)} \times_{X^{D(G_1) \sqcup D(G_2)}} K^{D(G_1) \sqcup D(G_2)} .$$
%The main property of this Kuranishi spaces is that are forgetful compatible with respect to the external edges of $G_1 \sqcup G_2$ (in the sense of \cite{OGW3}). Moreover they allow a description of the corner faces as follows.
% in \cite{OGW3}      the corner faces of $\overline{\mathcal{M}}_{G, m }^K$  
%A similar relation The corener faces of $\overline{\mathcal{M}}_{(G_1,m_1),(G_2,m_2)}^K$ for different $(G_1,m_1),(G_2,m_2)$ are related in a way similar.

%Let $\mathcal{G}(G_1,m_1,G_2,m_2)$ be the set of $ (G_1',E_1', m_1'),(G_2',E_2',m_2')$ such that $G_1'/E_1'=G_1$, $G_2'/E_2'=G_2$ with $m_1,m_2$ identified with $m_1',m_2'$ throughout the isomorphisms. 

Using a Kuranishi Structure on $ \overline{\mathcal{M}}_{(G_1,m_1),(G_2,m_2)} $ as above we equip $\overline{\mathcal{M}}_{(G_1,m_1),(G_2,m_2)}^K$   with a Kuranishi structure such that
\begin{enumerate}
\item the  forgetful compatibility holds;
% with respect each puncture associate to all the external edges $E^{ex}(G)$;
\item the evaluation map 
$$\text{ev} :  \overline{\mathcal{M}}_{(G_1,m_1),(G_2,m_2)}^K \rightarrow L^{H(G_1 \sqcup G_2) \setminus (H_{1,l} \sqcup H_{2,l}} \times X^{D(G_1)\sqcup D(G_2)}$$ 
is weakly submersive;
\item   To each  $(G_1',E_1', m_1'),(G_2',E_2',m_2')  \in \mathcal{G}(G_1,m_1,G_2,m_2)$ corresponds a corner face $ \overline{\mathcal{M}}_{(G_1,m_1),(G_2,m_2) }^K((G_1',E_1', m_1'),(G_2',E_2',m_2')) $ of $ \overline{\mathcal{M}}_{(G_1,m_1),(G_2,m_2) }^K$ which comes with an identification of Kuranishi spaces
\begin{equation} \label{corner-faces-multi-interp0-fact}
 \overline{\mathcal{M}}_{(G_1,m_1),(G_2,m_2) }^K( (G_1',E_1', m_1'),(G_2',E_2',m_2'))  =   \delta_{E'_1}  \delta_{E'_2}  \overline{\mathcal{M}}_{(G_1',m_1'),(G_2',m_2')}^K.
  % \overline{\mathcal{M}}_{G, m }(G' ,E', F') \cong   \left( \prod_{c \in C(G)} \overline{\mathcal{M}}_{(g_c,h_c), H_c, m_c}(\beta_c) \times_{L^{H(G)} \times X^{n_G} \times \partial_F \Delta^{C(G)}} ( L^{E(G)} \times L^{n_G} \times \partial_F \Delta ) \right) / \text{Aut}(G)
\end{equation}
%where  $m_1',m_2'$ is the labeling of $E(G_1'),E(G_2')$ corresponding to $\partial_{F'} m_1, \partial_{F'}m_2$.
\end{enumerate}

%We consider perturbations $\mathfrak{s}_{ (G_1,\{ E_{i,1}  \}) , (G_2, \{ E_{i,2}  \})}$ of $\overline{\mathcal{M}}_{(G_1,m_1),(G_2,m_2)}^K$ that are 
As in \cite{OGW3}, we consider collections of  perturbations 
%$\{ \mathfrak{s}_{(G,m)} \}_{(G,m) \in  \cup_{\chi' \geq \chi}  \mathfrak{G}(\beta, \chi'), }$ 
$( \mathfrak{s}_{(G_1,m_1), (G_2,m_2)} )_{ G_1,m_1), (G_2,m_2) \in \mathfrak{G}_*(\beta_1,\beta_2, \leq \kappa)}$ 
 of the collection of Kuranishi spaces $(\overline{\mathcal{M}}_{G,m}^K )_{ G_1,m_1), (G_2,m_2) \in \mathfrak{G}_*(\beta_1,\beta_2, \leq \kappa)}$ such that
%For each $(G,m) \in \mathfrak{G}(\beta)$ with $\chi(G) \geq \chi$, there exist a pertubation $\mathfrak{s}_{(G,m)}$ of the Kuranishi map $s$ such that 
%collection of  pertubations  $\{ \mathfrak{s}_{(G,m)} \}_{(G,m) \in  \cup_{\chi' \geq \chi}  \mathfrak{G}(\beta, \chi'), }$ such that 
\begin{enumerate}
\item are transversal to the zero section and small in order for constructions of virtual class to work;
\item are compatible with the identification of Kuranishi spaces (\ref{corner-faces-multi-interp0-fact});
\item 
%are forgetful compatible with respect to the external edges and $E_0$.
 the forgetful compatibility   holds.
\end{enumerate}

Set
$$ Z_{(G_1,m_1), (G_2,m_2)} = \text{ev}_*( \mathfrak{s}_{(G_1,m_1), (G_2,m_2)}^{-1}(0)) .$$
The $MC$-cycle $Z$ depends on the many choices we made. The same  cobordism argument used in \cite{OGW3} implies that two $MC$-cycles constructed in this way are isotopic.

\subsection{Proof of the factorization property}

We have the following main Lemma:
\begin{lemma} \label{lemma-factorization-fondamental}
The $MC$-cycles $  \mathbf{fact}_{\beta_1,\beta_2} (Z_{\beta})$ and $ Z_{\beta_1} \boxtimes Z_{\beta_2} $ are isotopic. 
\end{lemma}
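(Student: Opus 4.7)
The plan is to produce the isotopy by interpolating perturbations on the factorization moduli spaces $\overline{\mathcal{M}}_{(G_1,m_1),(G_2,m_2)}^K$, exploiting the fact that both MC-cycles under comparison naturally live in $\mathcal{Z}_{\beta_1,\beta_2}$ and both arise from this same collection of Kuranishi spaces, merely from two different but equally admissible perturbation schemes. First, I would check that $\mathbf{fact}_{\beta_1,\beta_2}(Z_{\beta})$ can be realized as the evaluation pushforward of a perturbation $\mathfrak{s}^{\mathrm{fact}}_{(G_1,m_1),(G_2,m_2)}$ obtained from the perturbation defining $Z_{\beta}$ on $\overline{\mathcal{M}}_{G_1 \sqcup G_2, m_1 \sqcup m_2}^K$, lifted through the quotient identification (\ref{space-fact}) and extracted according to the combinatorial rule (\ref{fact-map}). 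Next, I would check that $Z_{\beta_1} \boxtimes Z_{\beta_2}$ is realized similarly by the product perturbation $\mathfrak{s}_{(G_1,m_1)}^{\beta_1} \times \mathfrak{s}_{(G_2,m_2)}^{\beta_2}$, which descends to a compatible perturbation of $\overline{\mathcal{M}}_{(G_1,m_1),(G_2,m_2)}^K$ satisfying all the conditions listed after (\ref{corner-faces-multi-interp0-fact}).

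Once both sides are presented as MC-cycles of the same system of Kuranishi spaces for two admissible choices of perturbations, the second step is to invoke the standard cobordism argument used already in \cite{OGW3} and recalled in Section \ref{factorization-moduli-section}. One constructs a Kuranishi structure on $\overline{\mathcal{M}}_{(G_1,m_1),(G_2,m_2)}^K \times [0,1]$ whose restrictions to $0$ and $1$ agree with the two Kuranishi structures at hand, and a perturbation $\tilde{\mathfrak{s}}$ interpolating between $\mathfrak{s}^{\mathrm{fact}}$ and the product perturbation. The inductive construction proceeds stratum by stratum: at the deepest corners the boundary data already determine the interpolation from lower-dimensional input, and on open strata the space of admissible perturbations is convex once the boundary is fixed, so extension is automatic. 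The pushforward of $\tilde{\mathfrak{s}}^{-1}(0)$ under the evaluation map then defines an element of $\tilde{\mathcal{Z}}_{\beta_1,\beta_2}$ interpolating between the two MC-cycles, which is the desired isotopy.

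The main obstacle is ensuring that the interpolating perturbation respects the corner identifications (\ref{corner-faces-multi-interp0-fact}) simultaneously at both endpoints and throughout the homotopy, while also preserving the forgetful compatibility. This is the step where one must verify that the two perturbation schemes really do satisfy the \emph{same} list of compatibilities prescribed in Section \ref{factorization-moduli-section}; the nontrivial part is the box product side, where one needs the factorization behavior of the forgetful maps to be consistent with the way the factors are glued. Once this is checked, the inductive extension over strata works verbatim as in the construction of the Kuranishi perturbations of Theorem \ref{main-theorem}, since the space of perturbations with prescribed boundary behavior is convex. No new analytic input is required beyond what is already developed for the single-class case.
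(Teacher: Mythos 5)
Your overall strategy coincides with the paper's: realize both $\mathbf{fact}_{\beta_1,\beta_2}(Z_{\beta})$ and $Z_{\beta_1}\boxtimes Z_{\beta_2}$ as virtual classes of the same system of Kuranishi spaces $\overline{\mathcal{M}}_{(G_1,m_1),(G_2,m_2)}^K$ coming from two admissible perturbation schemes, then conclude by the standard inductive cobordism argument. The realization of the factorization side via the quotient identification (\ref{space-fact}) is exactly what the paper does.

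However, there is a genuine gap on the box--product side. You assert that the product perturbation $\mathfrak{s}^{\beta_1}_{(G_1,m_1)}\times\mathfrak{s}^{\beta_2}_{(G_2,m_2)}$ ``descends to a compatible perturbation of $\overline{\mathcal{M}}_{(G_1,m_1),(G_2,m_2)}^K$,'' but the domains do not match: the moduli space $\overline{\mathcal{M}}_{(G_1,m_1),(G_2,m_2)}$ of (\ref{multi-curve-interp0-fact}) carries a single simplex factor $\Delta^l$, whereas the product $\overline{\mathcal{M}}_{(G_1,m_1)}\times\overline{\mathcal{M}}_{(G_2,m_2)}$ carries a factor $\Delta^{r}\times\Delta^{l-r}$, and the definition (\ref{cup-formula}) of $\boxtimes$ is precisely a sum over the splitting index $r$. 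To present $Z_{\beta_1}\boxtimes Z_{\beta_2}$ as the virtual class of $\overline{\mathcal{M}}_{(G_1,m_1),(G_2,m_2)}^K$ one must first choose a prism decomposition of the standard simplex,
$\Delta^l=(\bigsqcup_{0\leq v\leq l}[0,\dots,v]\times[v,\dots,l])/\!\sim$, compatible with the boundary faces, and then transport the product Kuranishi structures and perturbations piece by piece through the resulting identification of moduli spaces; this is the content of (\ref{fact-symplex}) and (\ref{moduli-factorization}) in the paper, and it is the step that makes the sum over $r$ in (\ref{cup-formula}) appear geometrically. Without it, the claim that the product perturbation realizes $Z_{\beta_1}\boxtimes Z_{\beta_2}$ does not go through. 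Once this decomposition is supplied, the rest of your argument (convexity of the space of admissible perturbations, stratumwise extension, compatibility with the corner identifications (\ref{corner-faces-multi-interp0-fact}) and with forgetful maps) proceeds as you describe and as in the paper.
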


\begin{proof}

In order to prove the Lemma we need to realize the $MC$-cycles $  \mathbf{fact}_{\beta_1,\beta_2} (Z_{\beta})$ and $ Z_{\beta_1} \boxtimes Z_{\beta_2} $  as virtual fundamental class of the moduli space of multi-curves $ \overline{\mathcal{M}}_{(G_1,m_1),(G_2,m_2) }^K$ introduced above .

Fix a a Kuranishi structure on $ \overline{\mathcal{M}}_{G,m}$ constructed by inductive argument and pertubations $\{ \mathfrak{s}_{(G,m)} \}_{(G,m) \in    \mathfrak{G}(\beta, \leq \kappa), }$ as in \cite{OGW3}. Denote by $ Z_{\beta}$ its associated $MC$-cycle.

 Using relation (\ref{space-fact}),
the Kuranishi structure on $ \overline{\mathcal{M}}_{G,m}$ defines a Kuranishi structure on  $ \overline{\mathcal{M}}_{(G_1,m_1),(G_2,m_2) }^K$ for $G= G_1 \sqcup G_2, m= m_1 \sqcup m_2$, of the type considered above.

Moreover the collection of perturbations $( \mathfrak{s}_{(G,m)} )_{(G,m) \in   \mathfrak{G}(\beta, \leq \kappa) }$ yields to a collection of  perturbations 
$( \mathfrak{s}_{(G_1,m_1), (G_2,m_2)} )_{ G_1,m_1), (G_2,m_2) \in \mathfrak{G}_*(\beta_1,\beta_2, \leq \kappa)}$ 
 of the collection of Kuranishi spaces $(\overline{\mathcal{M}}_{G,m}^K )_{ (G_1,m_1), (G_2,m_2) \in \mathfrak{G}_*(\beta_1,\beta_2, \leq \kappa)}$
 with the properties stated above.  The associated  $MC$-cycle $Z_{\beta_1,\beta_2}$ realize the factorization map applied to $Z_{\beta}$:
\begin{equation} \label{factorization-realization}
     Z_{\beta_1,\beta_2} = \mathbf{fact}_{\beta_1,\beta_2} (Z_{\beta}) .
\end{equation}

%Using the relation (\ref{space-fact}),a Kuranishi structure on $ \overline{\mathcal{M}}_{G,m}$ constructed by inductive argument as in \cite{OGW3} defines a Kuranishi structure on  $\overline{\mathcal{M}}_{ (G_1,\{ E_{i,1}  \}) , (G_2, \{ E_{i,2}  \})}$ of the kind we considered above. A pertubation of  the Kuranishi structure $\{ \mathfrak{s}_{(G,m)} \}_{(G,m) \in  \cup_{\chi' \geq \chi}  \mathfrak{G}(\beta, \chi'), }$ of $ \overline{\mathcal{M}}_{G,m}$ defined in \cite{OGW3} induces a perturbation $\{ \mathfrak{s}_{(G_1,m_1), (G_2,m_2)} \}_{\chi(G) \geq \chi, \beta(G)=\beta }$ on  $\overline{\mathcal{M}}_{ (G_1,\{ E_{i,1}  \}) , (G_2, \{ E_{i,2}  \})}$ with the properties stated above. 

%The $\mathbf{fact}_{\beta_1,\beta_2} (Z_{\beta})$ is realized from  the evaluation map on the boundary marked points of this perturbated moduli space.

For each integer $l$ fix a  decomposition of the standard $l$-simplex $\Delta^l= [0,1,...,l]$ 
\begin{equation} \label{fact-symplex}
     \Delta^l = (\bigsqcup_{0 \leq v \leq l} [0,1,...,v] \times [v,v+1 ...,l] )/ \sim 
\end{equation}
%$$ \Delta_l = (\bigsqcup_{0 \leq v \leq l} [0,1,...,v] \times [v,v+1 ...,l] )/ \sim $$
where the face $\partial_{v+1} [0,1,...,v, v+1] \times [v+1, ...,l]$ of  $[0,1,...,v, v+1] \times [v+1, ...,l]$ is identified with the face $ [0,1,..., v] \times \partial_{v} [v, ...,l]$ of  $[0,1,...,v] \times [v, ...,l]$. We assume that the decomposition induced on each boundary face of $\Delta^l$  is compatible with the decomposition of $\Delta^{l-1}$ .

Using (\ref{fact-symplex}) we obtain  an identification of moduli spaces 
\begin{equation} \label{moduli-factorization}
 \overline{\mathcal{M}}_{ (G_1,\{ E_{i,1}  \}) , (G_2, \{ E_{i,2}  \})} = ( \bigsqcup_{0 \leq v \leq l}  \overline{\mathcal{M}}_{(G_1,  \{ E_{i,1}  \}_{0 \leq i \leq v})} \times \overline{\mathcal{M}}_{(G_2,  \{ E_{i,2}  \}_{v \leq i \leq l})} ) /\sim.
\end{equation}
Define on $ \overline{\mathcal{M}}_{ (G_1,\{ E_{i,1}  \}) , (G_2, \{ E_{i,2}  \})} $ the Kuranishi structure and perturbation induced by the right side of (\ref{moduli-factorization}).  
The associated  $MC$-cycle $Z_{\beta_1,\beta_2}$ is the product of $Z_{\beta_1} $ and $ Z_{\beta_2}$:
\begin{equation} \label{product-realization}
     Z_{\beta_1,\beta_2} = Z_{\beta_1} \boxtimes Z_{\beta_2}  .
\end{equation}
%Define on $ \overline{\mathcal{M}}_{ (G_1,\{ E_{i,1}  \}) , (G_2, \{ E_{i,2}  \})} $ the Kuranishi structure and perturbation induced by the right side of (\ref{moduli-factorization}):
%$$\mathfrak{s}_{ (G_1,\{ E_{i,1}  \}) , (G_2, \{ E_{i,2}  \})}=  \bigsqcup_v (\mathfrak{s}_{(G_1,  \{ E_{i,1}  \}_{0 \leq i \leq v})} \times \mathfrak{s}_{(G_2,  \{ E_{i,2}  \}_{v \leq i \leq l})} ) .$$ 
%The associated virtual fundamental class realize $ Z_{\beta_1} \cup Z_{\beta_2} \in \mathcal{Z}_{\beta_1,\beta_2}$.

The perturbations of the moduli spaces $(\overline{\mathcal{M}}_{G,m}^K )_{ G_1,m_1), (G_2,m_2) \in \mathfrak{G}_*(\beta_1,\beta_2, \leq \kappa)}$ we have used to construct the $MC$-cycles (\ref{factorization-realization}) and (\ref{product-realization}) are both of the type considered in subsection \ref{factorization-moduli-section}. Hence they are isotopic.

%As already observed before the two $ Z_{\beta_1,\beta_2}$ are isotopic.

%There exist a natural map of spaces
%\begin{equation} \label{space-fact}
% \overline{\mathcal{M}}_{ (G_1,\{ E_{i,1}  \}) , (G_2, \{ E_{i,2}  \})} \rightarrow  \overline{\mathcal{M}}_{ (G_1 \sqcup G_2 ,\{ E_{i,1} \sqcup E_{i,2}  \})} .
%\end{equation}

%If we put on $ \overline{\mathcal{M}}_{ (G_1,\{ E_{i,1}  \}) , (G_2, \{ E_{i,2}  \})}$ the Kuranishi structure induced by the product, the map (\ref{space-fact}) become a map of Kuranishi spaces.The virtual fundamental classes associated to the pull back of the pertubations of the spaces $\overline{\mathcal{M}}_{(G,\{ E_{0}  ,E_{1},...,E_{l} \})}$,  provide the realization of $  \mathbf{fact}_{\beta_1,\beta_2} (Z_{\beta})$.

%(W_1\cup W_2) ((T_1,\{ E_{i,1}  \}_{0 \leq i \leq l}) , (T_2, \{ E_{i,2}  \}_{0 \leq i \leq l}))= \sum_{0 \leq r \leq l}  W_1(T_1,  \{ E_{i,1}  \}_{0 \leq i \leq r}) \times  W_2(T_2, \{ E_{i,2}  \}_{r \leq i \leq l})   

%By the general argument, the two Kuranishi structures and perturbations compatible with (\ref{space-fact}) and (\ref{moduli-factorization}) are isotopic. The lemma follows.

The Lemma follows from the fact that the Kuranishi structures and perturbations of 
$ \overline{\mathcal{M}}_{ (G_1,\{ E_{i,1}  \}) , (G_2, \{ E_{i,2}  \})} $ used  (\ref{space-fact}) and (\ref{moduli-factorization}) are isotopic. This fact follows from general argument, as in \cite{OGW3}.

\end{proof}

%In the last Lemma we have considered perturbation of moduli spaces with constraines  to the one used  in \cite{OGW3} which lead to a (not nice) $MC$-cycle. In  \cite{OGW3} from a $MC$-cycles $Z_{\beta}$ was constructed  a  nice $MC$-cycle $Z_{\beta}^{\diamond}$ defined up to isotopy. 

%As we already recalled above the nice $MC$-cycle of \cite{OGW3} is not constreced directly from the moduli space of multi-curves. 

In order to construct a nice $MC$-cycle which satisfies the factorization property we are going to construct the nice $MC$-cycle directly from the moduli space of multi-curves, restricting the class of allowed perturbations. The factorization property will follow from  Lemma \ref{lemma-factorization-fondamental} adapted to these class of perturbations.

%and from an isotopy of $MC$-cycles we can construct an (not-unique) isotopy of nice $MC$-cycles  (see \cite{OGW3}). The analogous statements hold for $\mathcal{Z}_{\beta_1, \beta_2}$ defined above.  

%From the construction of \cite{OGW3}, it is easy to see the nice $MC$-cycle corresponding to $  \mathbf{fact}_{\beta_1,\beta_2} (Z_{\beta})$ and $ Z_{\beta_1} \cup Z_{\beta_2} $ are $  \mathbf{fact}_{\beta_1,\beta_2} (Z_{\beta}')$ and $ Z_{\beta_1}' \cup Z_{\beta_2}' $ respectively. Thus, to the isotopy of the lemma above we can associate an isotopy of nice $MC$-cycles between $  \mathbf{fact}_{\beta_1,\beta_2} (Z_{\beta}')$ and $ Z_{\beta_1}' \cup Z_{\beta_2}' $.

%The last Lemma considers  (not nice) $MC$-cycles constructed as in \cite{OGW3}. The nice a nice $MC$-cycle  was  constructed from the $MC$-cycle using a topological argument. 
%From a $MC$-cycles $Z_{\beta}$ we can construct a nice $MC$-cycle $Z_{\beta}^{\diamond}$ unique up to isotopy. 
%In order to apply the last Lemma to prove Proposition \ref{prop-factorization} we now restrict the  class of perturbations of the moduli space of multi-curve considered in \cite{OGW3}, obtaining directly a nice $MC$-cycle.

%We assume that the obstruction bundle $E_u$ on $ \overline{\mathcal{M}}_G$ has the following property:
% $E_u \subset E_u'$ if $G= cut_E(G)$.
%$$  E_u^G \subset  E_{u,s}^{G,m}$$

%FIx a perturbation $ \mathfrak{s}_{G,m}$ is a close to $ \mathfrak{s}_{G, \{ |E(G)| \}}$ in the $C^0$-topology. In particular the zero set of $ \mathfrak{s}_{G,m}$ is close to the zero set of $ \mathfrak{s}_{G, \{ |E(G)| \}}$.

%---

We assume that the Kuranishi structure have the following property:
\begin{itemize}
\item The obstruction bundle of  $ \overline{\mathcal{M}}_{G,m}$ contains the obstruction bundle of  $ \overline{\mathcal{M}}_{cut_{E}(G)}$, for each $E \subset E(G)$.
\end{itemize}
Observe that this condition is compatible at the boundary. 

For each graph $(G,m)$, from  $ \mathfrak{s}_{cut_{E(G)}G}$ we obtain  a perturbation  $ \mathfrak{s}_{G,m}^{\dagger} $ of $ \overline{\mathcal{M}}_{G,m}$.
We assume 
\begin{itemize}
\item if $G= \sqcup_i G_i$ is the decomposition of $G$ in connected components, we require 
$$ \mathfrak{s}_{G}  = \prod_i  \mathfrak{s}_{G_i} ;$$ 
\item $ \mathfrak{s}_{G,m} $ is close on the $C^0$-topology to  $ \mathfrak{s}_{G,m}^{\dagger} $.
%$ \mathfrak{s}_{cut_{E(G)}G}^{\dagger} \times \Delta^{|m|}$, where on $ \mathfrak{s}_{cut_{E(G)}G}^{\dagger} $ we put the Kuranishi map on the are zero connected components.  
\end{itemize}
In particular the second assumptions implies that the zero set of  $ \mathfrak{s}_{G,m} $ is close to the zero set of  $ \mathfrak{s}_{G,m}^{\dagger} $ outside a small neighborhood of the boundary.

%Observe that these condictions are compatible at the boundary. 
We can adapt the inductive argument of \cite{OGW3} used to construct the perturbation of the moduli space imposing the above conditions . Note that these  conditions  are compatible with the boundary conditions used in the argument of  \cite{OGW3}. 

%-As in \cite{OGW3}, the perturbation has to be chosen close enough to $ \mathfrak{s}_{G,m}^{\dagger} $ in order  next inductive steps work. 
In this way we obtain the nice $MC$-cycle $Z_{\leq \kappa}$ for the $MC$-complex truncated to  $ \mathfrak{G}(\beta,  \leq \kappa   )$, for any $\kappa$ . Adapting Lemma \ref{lemma-factorization-fondamental} to these class of perturbations, we can prove that  $Z_{\leq \kappa}$ satisfies the factorization property. 

To construct  a full nice $MC$-cycle  we can apply a standard extension argument (see \cite{Boundary-States}). 

The same argument apples to construct isotopies of nice $MC$-cycles satisfying the factorization property. 

%From the above we obtain a nice $MC$-cycle.
%Adapting the construction of Lemma  \ref{lemma-factorization-fondamental} to perturbations with the above constrains, we  obtain an isotopy of nice $MC$-cycles.   Proposition \ref{prop-factorization} follows. 

\section{Appendix: Abelian Chern-Simons Propagator}

In this section we review the construction of the Abelian  Chern-Simons Propagator. 
% that we use in the paper. We only need to consider the \emph{abelian} Chern Simons . where the things are particularly simple.

%\subsection{Propagator}

Let $M$ be a compact oriented three-manifold.
Let  $\mathcal{A} < \Omega^*(M)$ be a finite dimensional vector subspace of the space of closed differential forms of $M$ such that the inclusion in the cohomology of $M$ defines an isomorphism:
$$   \mathcal{A} \xrightarrow{\,\raisebox{-0.4ex}{$\sim$}\,} H^*(M) . $$
Let $\alpha_i ,\beta_i \in  \mathcal{A} $, such that $([\alpha_i] ,[\beta_i] )_i$ is a  symplectic basis of $H^*(M)$.

%Consider a subspace closed forms $V < \Omega^*(L)$ such that $ V \rightarrow H^*(L)$ is an isomorphism. 

%Let $\{ \mathbb{\alpha}_i , \beta_i \} $ be a symplectic basis of $H^*(M)$. 

%and $ \{ \beta_i \} \in \Omega_2(L)$ such that $ \{ [ \alpha_i ] \} $ is a basis of $ H_1(L) $ , $ \{ [ \beta_i ] \} $  is a basis of $ H_2(L) $ and
%$$ \int_L \alpha_i \wedge \beta_i = \delta_{ij}  .$$
%To the subspace $\mathcal{A}$ it is associate a projection operator $ \pi_{\mathcal{A}}: \Omega^*(M) \rightarrow \mathcal{A} $. 

Denote by  $\text{Conf}_2(M)$ the configuration space of two points of $M$ , which is the differential geometric blow-up of $M \times M$ along the diagonal $Diag \subset M \times M$. 
%More specifically we replace a tubular neighbourhood of $Diag$ isomorphic to $M \times D^3 , D(N^*(Diag))$ with $   $
$\text{Conf}_2(M)$  is a manifold with boundary characterized by the following two properties:
\begin{itemize}
    \item The interior of $\text{Conf}_2(M)$ is given by  $ M \times M \setminus Diag$;
    \item the boundary of $\text{Conf}_2(M)$ is  the sphere bundle of the normal bundle to the diagonal 
$$ \partial  \text{Conf}_2(M) = S(N(Diag \subset M \times M) .$$
\end{itemize}
%The interior of $\text{Conf}_2(M)$ is given by $ M \times M \setminus Diag$ and the boundary is given by the sphere bundle of the normal bundle to the diagonal 
%$$ \partial  \text{Conf}_2(M) = S(N(Diag \subset M \times M) .$$
The manifold $\text{Conf}_2(M)$ comes with natural maps $ pr_i :  \partial \text{Conf}_2(M)  \rightarrow M $, $i=1,2$ . 

%--Note that, topologically,  $ \partial \text{Conf}_2(M)  = S^2 \times M$ and in particular  $H^2(\partial \text{Conf}_2(M) , \R ) = H^2(S^2) \otimes H^0 (M) \oplus H^0(S^2) \otimes H^2 (M)  $.
%EULER STRUCTURES?

We have also a switch diffeomorphism 
$$sw : \text{Conf}_2(M)  \rightarrow \text{Conf}_2(M) ,$$ 
which is inducted from the switch homomorphism on $M \times M$.

Denote by $H^2(\partial \text{Conf}_2(M) , \Z )^-$  the cohomology anti-invariant under the reverse diffeomorhism. Observe that 
$$H^2(\partial \text{Conf}_2(M) , \Z )^- = H^2(S^2, \Z)  =  \Z .$$

\begin{lemma}  \label{construction-propogator}
Let  $\mathcal{A} < \Omega^*(M)$ be as above. There exists a differential form $P \in \Omega_2(\text{Conf}_2(L))$ such that 
\begin{enumerate}
\item $dP = \sum_i (\alpha_i \times \beta_i  + \beta_i \times \alpha_i )$  \label{propagator-differential}
\item
%$  [i_{\partial}^*( P)] = Vol_{S^2} \otimes 1 \in H^*(\partial \text{Conf}_2(M) )   $,  
$  [i_{\partial}^*( P)] \in H^*(\partial \text{Conf}_2(M) )^-   $ is the generator of $H^*(\partial \text{Conf}_2(M) , \Z )^-$.
\label{propagator-singularity}
%where $Vol_{S^2} $ is the generator of $H^2(S^2)$  and
Here $i_{\partial}: \partial \text{Conf}_2(M) \rightarrow \text{Conf}_2(M) $ denotes the inclusion map of the boundary;

%$$ \text{lim}_{\epsilon \rightarrow 0} \int_{S^2_{\epsilon}} P =  1            $$
\item $ \text{sw}^*(P) = -P  $, where $sw : \text{Conf}_2(M)  \rightarrow \text{Conf}_2(M) $ is the switch homomorphism, induced from the switch homomorphism on $M \times M$.
\item $\int_{\text{Conf}_2(M)} P \wedge \pi_1^*(\alpha') \wedge \pi_2^*(\alpha'') =0 $  for each   $ \alpha', \alpha'' \in \mathcal{A} .$
% \item $   < P, \alpha' \otimes \alpha''  >   =0 \text{ for each  }  \alpha', \alpha'' \in \mathcal{A} .$
\end{enumerate}
\end{lemma}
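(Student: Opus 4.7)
The strategy follows the standard construction of the Chern-Simons propagator (Axelrod-Singer, Bott-Taubes, Kontsevich). I would first identify $\Omega := \sum_i(\alpha_i\times\beta_i+\beta_i\times\alpha_i)$ (the right-hand side of (1), up to Koszul signs) with a de Rham representative of the Poincar\'e dual of the diagonal $\Delta\subset M\times M$. This follows from the K\"unneth/Poincar\'e formula $[PD(\Delta)]=\sum_i(-1)^{|e_i|}e_i\times e^i$ for any basis $(e_i)$ of $H^*(M)$ with Poincar\'e-dual basis $(e^i)$, specialised to the symplectic basis $(\alpha_i,\beta_i)$. On $\text{Conf}_2(M)$ the diagonal has been replaced by the projectivised normal bundle $\partial\text{Conf}_2(M)$, so $[\Omega]$ is trivial in $H^3(\text{Conf}_2(M))$ by the long exact sequence of the pair $(\text{Conf}_2(M),\partial\text{Conf}_2(M))$, and a primitive $P_0$ of $\Omega$ exists.

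To upgrade $P_0$ so that (2) holds, I would produce it as $\psi-\phi$, where $\phi\in\Omega^2(M\times M)$ satisfies $\tau-\Omega=d\phi$ for a Thom form $\tau$ of the normal bundle $N\Delta$ supported in a tubular neighborhood of $\Delta$, and $\psi\in\Omega^2(\text{Conf}_2(M))$ is a primitive of the pullback of $\tau$ whose restriction to the boundary is the angular $2$-form of the sphere bundle $\partial\text{Conf}_2(M)\to\Delta$. Such a $\psi$ exists because the Thom form pulled back to the blow-up is exact, realised by the global angular form of the oriented $S^2$-bundle; its class on each fibre is the generator of $H^2(S^2,\mathbb{Z})$, and it lives in the $sw$-antiinvariant summand since $sw$ reverses orientation on $M\times M$ (as $\dim M$ is odd) and hence flips the fibre orientation. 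Setting $P_1:=\tfrac12(P_0-sw^*P_0)$ then enforces (3); compatibility with (1) follows from the matching orientation analysis giving $sw^*\Omega=-\Omega$.

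For (4), I would subtract from $P_1$ a closed $2$-form $\rho\in\pi_1^*\mathcal{A}\wedge\pi_2^*\mathcal{A}$ on $M\times M$, antisymmetrized under $sw$. Closedness of $\rho$ preserves (1); the pullback of $\rho$ to $\partial\text{Conf}_2(M)$ factors through the projection to $\Delta$, hence has trivial class on each $S^2$-fibre, preserving (2); antisymmetry preserves (3). The coefficients of $\rho$ solve a finite linear system of pairings against the test forms $\pi_1^*\alpha'\wedge\pi_2^*\alpha''$, which is solvable by the non-degeneracy of the Poincar\'e pairing on $\mathcal{A}\cong H^*(M)$. The principal obstacle is the second step: producing $\psi$ that is simultaneously a global primitive of the Thom form on $\text{Conf}_2(M)$ \emph{and} restricts to the canonical generator of $H^2(\partial\text{Conf}_2(M),\mathbb{Z})^-$. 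This is the core technical point, where the Fulton-MacPherson-type geometry of the blow-up plays an essential role.
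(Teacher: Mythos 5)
Your proposal is correct and follows essentially the same route as the paper: the paper builds $P$ by extending a fiberwise generator $\eta$ of $H^2(\partial\text{Conf}_2(M))^-$ over a collar with a cutoff (your global angular form of the $S^2$-bundle), corrects by a global $\psi$ on $M\times M$ using the exactness of the difference of two representatives of $PD(\Delta)$, and finally adds an element of $\mathcal{A}\otimes\mathcal{A}$ to enforce condition (4). Your explicit attention to the Koszul signs and to $sw^*$ acting by $-1$ on the normal orientation (since $\dim M$ is odd) is in fact slightly more careful than the paper's sketch.
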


Instead to consider $P$ as a differential forms on $\text{Conf}_2(L)$ we can think $P$ as an element of $\Omega^2(L \times L \setminus Diag)$, with singularity along the diagonal.
The second condition implies that
$\text{lim}_{\epsilon \rightarrow 0} \int_{S^2_{\epsilon}} P =  1 $          
where $S^2_{\epsilon}$ is a family of spheres of radius $\epsilon$ surrounding the diagonal of $M \times M$. %with radius that goes to zero for $\epsilon \rightarrow 0$.

Using the propagator $P$ we  define the operator 
$$ \mathbf{P} : \Omega^*(M) \rightarrow \Omega^*(M) $$ 
$$ \mathbf{P}(\omega) = \pi_* (P \wedge \pi_2^* (\omega) )  $$ 
which satisfies the relation
\begin{equation} \label{diff-P}
d \mathbf{P} - \mathbf{P} d = Id - \pi_{\mathcal{A}} .
\end{equation}

Since the construction of the propagator $P$ of this paper is slightly simpler than the one of \cite{CS} (see remark below), we provide a quick construction of it. 
Let $\eta \in \Omega^2(\partial Conf_2(L))$ be a closed two form such that 
$ [i_{\partial}^*( \eta)] = \omega \otimes 1 \in H^2(\partial \text{Conf}_2(M) )   $.
%its restriction to $S^2$ in $\partial C_2(L) = S^2 \times L$ is a representative of the generator of $H^2(S^2)$. 
Extend $\eta$ to a closed differential form $\eta'$ on a collar $T$ of $\partial C_2(M)$. Let $\rho$ a cut off function $T$.  The differential form $d (\rho \eta') $ can be considered as a smooth differential form on $M \times M$. Its homology class is  $ [d (\rho \eta')] = PD (Diag) \in H^3(M \times M)$. Hence there exists  $\psi \in \Omega^2(M \times M )$ such that 
$ d (\rho \eta')  = \sum_i (\alpha_i \times \beta_i  + \beta_i \times \alpha_i ) + d \psi  . $
Set $P'= \rho \eta' - \psi $. $P'$ satisfies   conditions $(1), (2)$ and $(3)$ of the lemma.  Finally, to obtain also  condition (4) we add to $P'$ a suitable element of $\mathcal{A} \otimes \mathcal{A}$.

\begin{remark}
The construction of $P$ was provided in \cite{CS} in the case of usual perturbative Chern Simons theory.  However, in this paper we consider an analogous of the \emph{point splitting} regularization of the abelian Chern-Simons perturbation expansion which is naturally related to Open Gromov-Witten theory. For  this reason, in \cite{CS}  it is required  a more restrictive condition on $i_{\partial}^*( P)$ than condition   (\ref{propagator-singularity}) above and it requires a choice of a connection on the tangent space of $M$. 
\end{remark}

\subsubsection{One parameter version} \label{parameter-propagator-section}
The propagator $P$ is not uniquely determinated but depends on the choice of $\mathcal{A}$ and $\eta$.   After these choices are made $P$ is uniquely determined up to  $ d \Omega^1(M \times M)$.
 We shall show that different choices yield to isotopic propagators.

Consider a one parameter family $ ( \mathcal{A}_t )_{t \in [0,1]}$ of finite dimensional vector subspaces of closed differential forms of $M$ such that the inclusion in the cohomology  
$   \mathcal{A}_t \xrightarrow{\,\raisebox{-0.4ex}{$\sim$}\,} H^*(M) $
is an isomorphism for each $t$.

\begin{lemma}
 To $ \{ \mathcal{A}_t \}_{t \in [0,1]}$ we can associate a finite vector space $\tilde{ \mathcal{A}}$ of closed differential forms on $M \times [0,1]$, 
%$\tilde{ \mathcal{A}} < \Omega^*(M \times [0,1])$ 
such that  
$$\tilde{ \mathcal{A}} \xrightarrow{\,\raisebox{-0.4ex}{$\sim$}\,} H^*(M \times [0,1])$$
 is an isomorphism and $i_t^*(\tilde{ \mathcal{A}}) = \mathcal{A}_t $ for each $t \in [0,1]$ (here $i_t: M \hookrightarrow M \times [0,1]$ is the inclusion at time $t$). 
\end{lemma}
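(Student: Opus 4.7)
The plan is to lift a smooth family of bases of $(\mathcal{A}_t)_t$ to closed forms on $M \times [0,1]$ by adding suitable ``transgression'' terms in $dt$. Any form on $M \times [0,1]$ decomposes as $\omega = \omega_0(t) + \omega_1(t) \wedge dt$ where $\omega_0(t)$ and $\omega_1(t)$ are $t$-dependent forms on $M$, and $\omega$ is closed iff $d_M \omega_0(t) = 0$ for all $t$ and $\partial_t \omega_0(t) = \pm d_M \omega_1(t)$. Since $i_t^*\omega = \omega_0(t)$, the content of the lemma is precisely the solvability of this transgression equation with a smoothly varying primitive $\omega_1(t)$.

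Concretely, I would first fix an isomorphism $\mathcal{A}_0 \xrightarrow{\sim} H^*(M)$ induced by inclusion, and choose a smooth family of linear maps $\phi_t : \mathcal{A}_0 \to \mathcal{A}_t$ whose composition with the inclusion into cohomology is the identity on $H^*(M)$ for every $t$ (this is possible since each $\mathcal{A}_t \to H^*(M)$ is an iso and the family is assumed smooth). Setting $\alpha_t^{(i)} := \phi_t(\alpha_0^{(i)})$ for a fixed basis $\{\alpha_0^{(i)}\}$ of $\mathcal{A}_0$ produces a smooth family of bases of $\mathcal{A}_t$ whose cohomology classes $[\alpha_t^{(i)}] \in H^*(M)$ are \emph{independent of} $t$. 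Differentiating gives that $\partial_t \alpha_t^{(i)}$ is closed and cohomologically trivial for every $t$, hence exact.

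Next I would solve $d_M \gamma_t^{(i)} = \partial_t \alpha_t^{(i)}$ smoothly in $t$. A clean way is to pick a chain homotopy $\mathbf{P}$ as in (\ref{diff-P}): applying $\mathbf{P}$ to $\partial_t \alpha_t^{(i)}$ yields, up to the projection $\pi_{\mathcal{A}}$ which annihilates exact forms, a primitive depending smoothly on $t$; equivalently one can use a Hodge-theoretic Green's operator for a fixed metric. Define
\begin{equation*}
\tilde{\alpha}^{(i)} := \alpha_t^{(i)} + \gamma_t^{(i)} \wedge dt \ \in \ \Omega^*(M \times [0,1]),
\end{equation*}
which is closed by construction, satisfies $i_t^*\tilde{\alpha}^{(i)} = \alpha_t^{(i)}$, and spans a finite-dimensional subspace $\tilde{\mathcal{A}}$. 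Since the $[\alpha_0^{(i)}]$ form a basis of $H^*(M) \cong H^*(M \times [0,1])$ and $i_0^*$ induces this isomorphism, the classes $[\tilde{\alpha}^{(i)}]$ form a basis of $H^*(M \times [0,1])$; the equality $i_t^*(\tilde{\mathcal{A}}) = \mathcal{A}_t$ follows from the restriction identity and a dimension count.

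The main obstacle is the smoothness/regularity of the family $(\mathcal{A}_t)_t$ itself and of the resulting primitives $\gamma_t^{(i)}$. The construction of $\phi_t$ is essentially a parallel transport in the trivial bundle of de Rham cohomology, which requires the family of finite-dimensional subspaces to vary smoothly in $\Omega^*(M)$; one can always arrange this by smoothing, but it should be assumed or verified from context. Solving the transgression equation smoothly is then routine using any fixed right-inverse to $d_M$ modulo $\mathcal{A}_t$, such as $\mathbf{P}$ or a Hodge Green's operator.
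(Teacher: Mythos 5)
Your proposal is correct and takes essentially the same route as the paper: choose for each cohomology class a family $\alpha_t \in \mathcal{A}_t$ with $[\alpha_t]$ constant, note that $\partial_t \alpha_t$ is then exact, pick a primitive $\alpha_t'$ varying nicely in $t$ (the paper normalizes it by $\langle \alpha_t', \omega\rangle = 0$ for $\omega \in \mathcal{A}$, where you instead invoke $\mathbf{P}$ or a Green's operator), and set $\tilde{\alpha} = \alpha_t + \alpha_t'\, dt$. The extra care you take about smoothness of the family and of the primitives is a reasonable elaboration of the same argument, not a different one.
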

\begin{proof}
 Let $\alpha \in H^*(M)$.  Let  $\alpha_t \in \mathcal{A}_t$ with $[\alpha_t]=\alpha$. Let   $\alpha_t'$ such that $ \frac{d}{dt} \alpha_t = d \alpha'_t $ and $\langle \alpha_t', \omega \rangle=0$ for each $\omega \in \mathcal{A}$. $\alpha_t'$ is determinate up to an exact form.  Set $ \tilde{\alpha} = \alpha_t + \alpha'_t dt \in \Omega^*(M \times [0,1])$. Pick a base of $H^*(M)$ and  let
$ \tilde{ \mathcal{A}} $  be the span of the $ \tilde{\alpha}$ when $\alpha$ run on the elements of this base.   
\end{proof}

 % $\tilde{\alpha}$ is determinate up to an exact form. 
%$$ \frac{d}{dt} \beta_t = d \beta'_t  .$$
%Then

%$$ \tilde{\alpha} = \alpha_t + \alpha'_t dt $$
%is closed in $ L \times [0,1]$.

%--

\begin{lemma}  \label{parameter-propagator-lemma}
For $i=0,1$, let $P_i$ be a propagator compatible with $\mathcal{A}_i$ as in Lemma  \ref{construction-propogator} . There exists a differential form $\tilde{P} \in \Omega_2(\text{Conf}_2(L) \times [0,1])$ such that 
\begin{enumerate}
\item $i_i^*(\tilde{P})= P_i , i=0,1$
\item $d\tilde{P} = \sum_i (\tilde{\alpha}_i \times \tilde{\beta}_i  + \tilde{\beta}_i \times \tilde{\alpha}_i ) $
\item $  [i_{\partial}^*( P)] = \omega \otimes 1 \in H^*(\partial \text{Conf}_2(M) )   $,
where $\omega $ is the generator of $H^2(S^2)$ .
%$$ \text{lim}_{\epsilon \rightarrow 0} \int_{S^2_{\epsilon}} P =  1            $$
\item $ \text{sw}^*(P) = -P  $
\item $\int_{\text{Conf}_2(M)} \tilde{P} \wedge \pi_1^*(\tilde(\alpha)') \wedge \pi_2^*(\tilde{\alpha}'') =0 \in  \Omega^*( [0,1] ) $  for each   $ \tilde{\alpha}', \tilde{\alpha}'' \in \tilde{\mathcal{A}} .$
%\item $   < P, \alpha' \otimes \alpha''  >   =0 \text{ for each  }  \alpha', \alpha'' \in \mathcal{A} .$
\end{enumerate}
\end{lemma}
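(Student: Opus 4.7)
The approach is to carry out a family version of the construction in Lemma~\ref{construction-propogator}, producing differential forms on $\text{Conf}_2(M) \times [0,1]$ rather than on $\text{Conf}_2(M)$, with prescribed restrictions at $t = 0, 1$. Since $[0,1]$ is contractible the relevant cohomology classes pull back without obstruction, and the three building blocks of the original proof---a boundary 2-form $\tilde{\eta}$ representing the generator of $H^2(S^2)$, its extension across a collar with cutoff $\rho$, and a Poincar\'e-lemma primitive $\tilde{\psi}$---all admit natural parametric versions. The plan splits into two phases: first build a preliminary $\tilde{P}'$ satisfying the algebraic conditions (2)--(5), then correct it by an exact form to also satisfy the boundary matching (1).

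For Phase~1, I would pick $\tilde{\eta} \in \Omega^2(\partial \text{Conf}_2(M) \times [0,1])$ closed, representing $\omega \otimes 1$, and restricting to the $\eta_0, \eta_1$ used to construct $P_0, P_1$ at $t = 0, 1$ (any convex interpolation adjusted by a suitable $\mu \wedge dt$ suffices). Extend across a collar, apply the cutoff $\rho$, and use the Poincar\'e lemma on $M \times M \times [0,1]$ to produce $\tilde{\psi}$ with
\[
d(\rho \tilde{\eta}') = \sum_i \bigl(\tilde{\alpha}_i \times \tilde{\beta}_i + \tilde{\beta}_i \times \tilde{\alpha}_i\bigr) + d\tilde{\psi}.
\]
Set $\tilde{P}' := \rho\tilde{\eta}' - \tilde{\psi}$, antisymmetrize under $\mathrm{sw}$, and subtract a suitable element of $\tilde{\mathcal{A}} \otimes \tilde{\mathcal{A}}$ to enforce (5). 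For Phase~2, the discrepancies $Q_i := P_i - i_i^*(\tilde{P}')$ are closed, $\mathrm{sw}$-antisymmetric, orthogonal to $\mathcal{A}_i \otimes \mathcal{A}_i$, and cohomologically trivial on $\partial \text{Conf}_2(M)$; hence $Q_i = d\sigma_i$ for some $\sigma_i$ vanishing on the boundary and antisymmetric under $\mathrm{sw}$. Choosing $\tilde{\sigma}$ on the product that restricts to $\sigma_i$ at $t = i$ and setting $\tilde{P} := \tilde{P}' + d\tilde{\sigma}$ establishes (1)--(4).

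The main obstacle is preserving condition (5) through Phase~2: the added $d\tilde{\sigma}$ will generically spoil the orthogonality integral, so one must subtract an element of $\tilde{\mathcal{A}} \otimes \tilde{\mathcal{A}}$ to restore it. Since (5) already holds at $t = 0, 1$ by hypothesis on $P_0, P_1$, the needed correction vanishes at the endpoints and can be chosen of the form $f(t) \cdot \theta$ with $f(0) = f(1) = 0$ and $\theta \in \tilde{\mathcal{A}} \otimes \tilde{\mathcal{A}}$, so condition (1) survives; nondegeneracy of the symplectic pairing on $\tilde{\mathcal{A}}$ guarantees such a correction exists, and a final antisymmetrization leaves (4) untouched.
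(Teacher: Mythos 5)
Your proposal follows essentially the same route as the paper: rerun the construction of Lemma \ref{construction-propogator} in a family over $[0,1]$ (boundary form $\tilde{\eta}$ with prescribed endpoint restrictions, collar extension with cutoff, a primitive $\tilde{\psi}$ on $M\times M\times[0,1]$, antisymmetrization, and a final correction inside $\pi_1^*(\tilde{\mathcal{A}})\wedge\pi_2^*(\tilde{\mathcal{A}})$). Your Phase~2, which repairs the endpoint condition (1) by adding $d\tilde{\sigma}$ for a primitive of the discrepancies $Q_i$, is a reasonable supplement to a step the paper leaves implicit (it does require checking that $Q_i$ is exact \emph{relative to the boundary}, i.e.\ that $\sigma_i$ can indeed be chosen to vanish on $\partial\mathrm{Conf}_2(M)$, not merely that $[i_\partial^*Q_i]=0$).

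The one point that needs repair is your final correction for condition (5): a form $f(t)\cdot\theta$ with $\theta\in\tilde{\mathcal{A}}\otimes\tilde{\mathcal{A}}$ and $f$ non-constant is \emph{not} closed ($d(f\theta)=f'(t)\,dt\wedge\theta$), so adding it destroys condition (2). The paper's mechanism avoids this: property (2) together with Stokes for the pushforward implies $d_t\int_{\mathrm{Conf}_2(M)}\tilde{P}'\wedge\pi_1^*\tilde{\alpha}'\wedge\pi_2^*\tilde{\alpha}''=0$, so the function part of the integral in (5) is constant in $t$ and hence already vanishes by its value at $t=0$; likewise your $d\tilde{\sigma}$ only contributes an exact, purely $dt$-component term. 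Thus the only correction ever needed is in the $dt$-direction, and it can be effected by closed forms of the type $g(t)\,dt\wedge\pi_1^*\alpha_t\wedge\pi_2^*\alpha'_t$ (which lie in $\pi_1^*(\tilde{\mathcal{A}})\wedge\pi_2^*(\tilde{\mathcal{A}})$ up to such terms and restrict to zero at $t=0,1$ automatically). With that substitution your argument goes through and matches the paper's.
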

In the last point the integral is considered  as the push forward of differential forms on $ \text{Conf}_2(M) \times [0,1] $ throughout the projection $ \text{Conf}_2(M) \times [0,1] \rightarrow [0,1]$.

The construction of $\tilde{P}$ follows the same lines of  the argument   in Lemma  \ref{construction-propogator}. Pick a closed two differential form $\tilde{\eta} \in \Omega^2(\partial \text{Conf}_2(M) \times [0,1] )$ such that $i_i^*(\tilde{\eta})= i_{\partial}^* (P_i) , i=0,1$, and extend it on a neighborhood of $\partial \text{Conf}_2(M) \times [0,1] $. We can then construct as before a $\tilde{P}' \in \Omega_2(\text{Conf}_2(L) \times [0,1])  $ which satisfies property $(2), (3)$ and $(4)$ of the lemma. 
%such that  $d\tilde{P}' = \sum_i (\tilde{\alpha}_i \times \tilde{\beta}_i  + \tilde{\beta}_i \times \tilde{\alpha}_i ) $. 
From $(2)$ follows that $d_t \int_{\text{Conf}_2(M)} \tilde{P'} \wedge \pi_1^*(\tilde(\alpha)') \wedge \pi_2^*(\tilde{\alpha}'') =0 $  for each   $ \tilde{\alpha}', \tilde{\alpha}'' \in \tilde{\mathcal{A}} .$ Finally, we  add to $\tilde{P}'$ a suitable element of $ \pi_1^*(\tilde{\mathcal{A}}) \wedge \pi_2^*(\tilde{\mathcal{A}})$ in order to satisfy property $(5)$.

%----

%We can repeat the construction above and define $\tilde{P}$ such that
%$$ d\tilde{P} = \sum_i (\tilde{\alpha}_i \times \tilde{\beta}_i  + \tilde{\beta}_i \times \tilde{\alpha}_i ) .$$
%Write $\tilde{P} = P_t + K_t dt $. We can check that 
%$$ \frac{d}{dt}  < P_t, \alpha_t' \otimes \alpha_t''  >   =0  $$
%for each    $\alpha', \alpha'' \in V .$

\end{document}